\newcommand{\Z}{\mathbb{Z}}
\newcommand{\Q}{\mathbb{Q}}
\newcommand{\C}{\mathbb{C}}
\let\temp\phi
\let\phi\varphi
\let\varphi\temp
\renewcommand{\(}{\left(}
\renewcommand{\)}{\right)}
\newcommand{\SL}{\operatorname{SL}}
\newcommand{\GL}{\operatorname{GL}}
\newcommand{\Sh}{\operatorname{Sh}}
\newcommand{\Gal}{\operatorname{Gal}}
\renewcommand{\sl}{\big|}
\newcommand{\sk}{\big|_k }
\renewcommand{\bar}[1]{\overline{#1}}
\newtheorem{theorem}{Theorem}[section]
\newtheorem{lemma}[theorem]{Lemma}
\newtheorem{proposition}[theorem]{Proposition}
\theoremstyle{remark}
\newtheorem*{remark}{Remark}
\numberwithin{equation}{section}
\begin{document}


\title{Congruences for Level $1$ cusp forms of half-integral weight}

\date{\today}
\author{Robert Dicks}
\address{Department of Mathematics\\
University of Illinois\\
Urbana, IL 61801} 
\email{rdicks2@illinois.edu}

 
\begin{abstract}
Suppose that $\ell \geq 5$ is prime. 
For a positive integer $N$ with $4 \mid N$, previous works studied properties of half-integral weight modular forms on $\Gamma_0(N)$ which are supported on finitely many square classes modulo $\ell$, in some cases proving that these forms are congruent to the image of a single variable theta series under some number of iterations of the Ramanujan $\Theta$-operator. Here, we study the analogous problem for modular forms of half-integral weight on $\SL_{2}(\Z)$. Let $\eta$ be the Dedekind eta function. For a wide range of weights, we prove that every half-integral weight modular form on $\SL_{2}(\Z)$ which is supported on finitely many square classes modulo $\ell$ can be written modulo $\ell$ in terms of $\eta^{\ell}$ and an iterated derivative of $\eta$.
\end{abstract}


\maketitle

 
 \section{Introduction}
Suppose that $\ell \geq 5$ is prime. 
Many papers \cite{Ono-Skinner} \cite{Bruinier} \cite{Bruinier-Ono} \cite{Ahlgren-Boylan} \cite{Ahlgren-Boylan2} \cite{Ahlgren-Choi-Rouse} \cite{Ahlgren-Rouse}
study half-integral weight modular forms with few non-vanishing coefficients modulo $\ell$ and give applications for divisibility properties of the algebraic parts of the central critical values of modular $L$-functions and the orders of Tate-Shafarevich groups of elliptic curves.

These results are modulo $\ell$ analogues of a theorem of Vign\`{e}ras in characteristic $0$. 
If $\lambda$ is a non-negative integer and $N$ is a positive integer with $4\mid N$, let $M_{\lambda+\frac{1}{2}}(\Gamma_1(N))$ be the space of modular forms of weight $\lambda+\frac{1}{2}$ (in the sense of \cite{Shimura}) on $\Gamma_1(N)$. 
Vign\`{e}ras 
proved that a form $F(z) \in M_{\lambda+\frac{1}{2}}(\Gamma_1(N))$ whose coefficients are supported on finitely many square classes of integers is a linear combination of single-variable theta series. The precise result is below (Bruinier \cite{Bruinier-Vigneras} gave a different proof of this theorem ).

 \begin{theorem}{\cite{Vigneras}}\label{thm:Vigneras}
Suppose that $\lambda \geq 0$ is an integer, that $N$ is a positive integer with $4 \mid N$, and that $F(z) \in M_{\lambda+ \frac{1}{2}}(\Gamma_1(N))$. If there exist finitely many square-free integers $t_1$,$t_2$,...,$t_m$ for which

\[
F(z)=\sum_{i=1}^{m}\sum_{n=0}^{\infty}a(t_{i}n^{2})q^{t_{i}n^{2}} , \ \ \ q= e^{2\pi i z}
\]
 then $\lambda=0 \text{ or } 1$ and $F(z)$ is a linear combination of theta series.
 \end{theorem}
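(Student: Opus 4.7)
The plan is to attack the problem via the Shimura correspondence, after first reducing to the case where $F$ is supported on a single square class. For each prime $p$ coprime to $N t_1 \cdots t_m$, the quadratic twist $F \otimes \leg{\cdot}{p}$ lies in $M_{\lambda+1/2}(\Gamma_1(Np^2))$ and its restriction to each class $t_i$ gets multiplied by the constant $\leg{t_i}{p}$. Choosing enough primes $p$ to separate the $t_i$ modulo squares and applying character orthogonality, I would isolate each summand $F_{t_i}(z) = \sum_{n \geq 0} a(t_i n^2) q^{t_i n^2}$ as a modular form in $M_{\lambda+1/2}(\Gamma_1(N'))$ for suitable $N'$. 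Thus, without loss of generality, $F$ is supported on a single square class $t$.

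To this $F$ I would apply the Shimura correspondence to produce an integral-weight form $\Sh_t(F) \in M_{2\lambda}$ (on some level, with quadratic character) whose Dirichlet series factors as
$$L(s, \Sh_t(F)) \;=\; L(s - \lambda + 1, \chi_t) \cdot \sum_{n \geq 1} \frac{a(tn^2)}{n^s}.$$
When $\lambda + 1/2 \geq 5/2$ and $F$ is a cusp form, the lift $\Sh_t(F)$ is a cusp form of integral weight $2\lambda \geq 4$.

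Then I would split into three cases. For $\lambda = 0$ the conclusion is the Serre--Stark basis theorem for $M_{1/2}$. For $\lambda = 1$, the weight-$3/2$ theta series $\theta_{\psi,t}(z) = \sum_n \psi(n) n q^{tn^2}$ with $\psi$ an odd Dirichlet character are genuinely supported on a single square class, and my plan is to subtract from $F$ an appropriate linear combination of these and show that the difference has vanishing Shimura lift, forcing it to be zero. For $\lambda \geq 2$, the goal is to prove $F \equiv 0$. Hecke compatibility of the Shimura correspondence (operators $T_{p^2}$ on $F$ correspond to $T_p$ on $\Sh_t(F)$) means the sequence $\{a(tn^2)\}_n$ is determined multiplicatively by the integral-weight Hecke eigenvalues; comparing the functional equation for $L(s, F)$, whose gamma factor reflects the half-integral weight $\lambda + 1/2$, against that of $L(s, \Sh_t(F))$ should produce a discrepancy in vertical-strip growth forcing $F = 0$.

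The principal obstacle is the $\lambda \geq 2$ case: making the functional-equation argument rigorous requires delicate handling of Euler factors at primes dividing $Nt$ and of the kernel of the Shimura lift. A standard workaround invokes a Weil-style converse theorem for $L(s, F)$, which requires establishing functional equations for sufficiently many quadratic twists of $L(s, F)$; here the support hypothesis renders these twisted series tractable. Bruinier's alternative approach recasts the problem in terms of vector-valued modular forms for the Weil representation attached to a unary quadratic form, where the needed rigidity is manifest from the representation theory of the metaplectic cover of $\SL_2(\R)$.
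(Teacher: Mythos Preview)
The paper does not contain a proof of this statement. Theorem~\ref{thm:Vigneras} is quoted from Vign\`{e}ras' paper as background and motivation, with the parenthetical remark that Bruinier gave a different proof; no argument is supplied in the present paper. There is therefore nothing in the paper to compare your proposal against.

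That said, your sketch is broadly in line with the two proofs the paper alludes to. The reduction to a single square class by quadratic twisting is standard and correct. Your $\lambda=0$ case is exactly Serre--Stark. Your $\lambda=1$ plan (subtract off weight-$3/2$ unary theta series and show the remainder has trivial Shimura lift) is the right shape. For $\lambda\geq 2$ you correctly identify the crux: one must rule out any nonzero form, and both routes you mention---Vign\`{e}ras' original argument via functional equations/converse theorems for the $L$-series, and Bruinier's reformulation through vector-valued forms for the Weil representation---are precisely the two proofs cited. Your honest acknowledgment that the functional-equation step is where the real work lies is appropriate; what you have written is a plan rather than a proof, and the $\lambda\geq 2$ case would need substantial further detail to be complete.
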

 
 A recent result of Bella\"{\i}che, Green and Soundararajan \cite{Belliache-Green-Sound} implies for any half-integral weight modular form that the number of coefficients $\leq X$ which do not vanish modulo $\ell$
is $\gg\frac{\sqrt X}{\log\log X}$.  It is natural to suspect that the only half-integral weight forms for which the number of non-vanishing coefficients is close to this lower bound are
those which  are supported on finitely many square classes modulo $\ell$.  Forms of half-integral weight on  $\SL_2(\Z)$ whose coefficients are sparse modulo $\ell$ play an important role in the recent work of Ahlgren, Beckwith and Raum \cite{Scarcity} on scarcity of congruences for the partition function.  
 
 Ahlgren, Choi and Rouse 
proved a modulo $\ell$ analogue of Theorem~\ref{thm:Vigneras} for forms $f(z)$ in the Kohnen plus-space $S_{\lambda+\frac{1}{2}}^{+}(\Gamma_{0}(4))$. Their main theorem was the following.
 
 \begin{theorem}{\cite{Ahlgren-Choi-Rouse}}\label{Ahlgren-Choi-RouseMainTheorem}
 Suppose that $\ell \geq 5$ is prime and that $K$ is a number field. Fix an embedding of $K$ into $\C$ and a prime  $v$ of $K$ above $\ell$. Let $\mathcal{O}_{v}$ denote the ring of $v$-integral elements of $K$. Suppose that $f \in S^{+}_{\lambda+\frac{1}{2}}(\Gamma_0(4)) \cap \mathcal{O}_{v}[[q]]$ satisfies
 \[
 f \equiv \sum_{i=1}^{m}\sum_{n=1}^{\infty}a(t_{i}n^{2})q^{t_{i}n^{2}} \not \equiv 0 \pmod{v},
 \]
 where each $t_{i}$ is a square-free positive integer. If $\lambda+\frac{1}{2} < \ell(\ell+1+\frac{1}{2})$, then $\lambda$ is even and
 \[
 f \equiv a(1)\sum_{n=1}^{\infty}n^{\lambda}q^{n^{2}} \pmod{v}.
 \]
 \end{theorem}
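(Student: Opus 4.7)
My strategy is to combine the theta operator $\Theta = q \tfrac{d}{dq}$ with twists by the Legendre symbol, and then use filtration bookkeeping in the Kohnen plus space to pin down $f$.

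First, I decompose $f \bmod v$ into pieces indexed by the Legendre character $\chi = \tleg{\cdot}{\ell}$: write $f = f^+ + f^- + f^0$, where $f^+$, $f^-$, $f^0$ collect the terms with $n$ a quadratic residue mod $\ell$, a nonresidue mod $\ell$, or a multiple of $\ell$, respectively. The twist $f \otimes \chi := \sum \chi(n) a(n) q^n$ is again a half-integral weight modular form (at higher level), and $f \otimes \chi \equiv f^+ - f^- \pmod v$. Because each $t_i$ is squarefree, the support hypothesis implies that $f^+$ and $f^-$ are each supported on finitely many square classes.

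The arithmetic engine is the coefficientwise form of Euler's criterion:
\[
\Theta^{(\ell-1)/2} f \equiv f^+ - f^- \pmod v,
\]
since $n^{(\ell-1)/2} \equiv \chi(n) \pmod \ell$ when $\ell \nmid n$ and vanishes when $\ell \mid n$. In particular $\Theta^{(\ell-1)/2} f^\pm \equiv \pm f^\pm \pmod v$. Combined with the filtration inequality $\omega(\Theta g) \leq \omega(g) + \ell + 1$---with equality unless $\omega(g) \equiv 0 \pmod \ell$---from the half-integral-weight analogue of Serre--Swinnerton-Dyer theory, these fixed-point relations force constrained filtration profiles across the iterations $g, \Theta g, \ldots, \Theta^{(\ell-1)/2} g$. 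The hypothesis $\lambda + \tfrac12 < \ell(\ell + \tfrac32)$ is precisely the Sturm-type threshold that makes this bookkeeping tight; carrying it out should yield $f^- \equiv 0 \pmod v$, since the eigenvalue $-1$ is incompatible with the filtration patterns available in this range. A parallel argument using the $U_\ell$ and $V_\ell$ operators on the plus space eliminates $f^0 \equiv 0 \pmod v$ (using cuspidality to discard the $n=0$ coefficient).

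At this stage, $f \equiv f^+ \pmod v$ is supported only on perfect squares. The candidate is the theta series $\theta_\lambda(z) := \sum_{n \geq 1} n^\lambda q^{n^2}$, itself a mod-$\ell$ form in $S^+_{\lambda+1/2}$ (since $\theta_\lambda$ is essentially $\Theta^\lambda$ applied to the basic theta, realized at weight $\lambda + \tfrac12$ in the relevant range). I would then show that within the weight bound, any form in $S^+_{\lambda+1/2}$ whose $v$-reduction is supported on perfect squares is a scalar multiple of $\theta_\lambda$---a one-dimensional Sturm-type dimension count for the subspace cut out by the square-support condition. Matching the $q$-coefficient identifies the scalar as $a(1)$. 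The parity conclusion that $\lambda$ is even follows from the Kohnen plus condition $(-1)^\lambda n \equiv 0, 1 \pmod 4$ applied at $n = 1$, which forces $(-1)^\lambda = 1$.

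The main obstacle is the filtration step eliminating $f^-$: converting the eigenvalue-$(-1)$ identity $\Theta^{(\ell-1)/2} f^- \equiv -f^-$ into a genuine vanishing statement requires a sufficiently sharp half-integral-weight filtration calculus in the plus space, precise enough to distinguish the two eigenvalues in the fixed-point relation. That the bound is $\ell(\ell+\tfrac32)$ rather than something larger signals that the arithmetic of filtration drops modulo $\ell$ across $(\ell-1)/2$ iterations of $\Theta$ is tight at the boundary, so a careful case analysis---together with compatibility checks for $\Theta$, $U_\ell$, and $\chi$-twisting on the plus space---seems unavoidable.
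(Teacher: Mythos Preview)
This theorem is not proved in the present paper: it is quoted verbatim from Ahlgren--Choi--Rouse and serves only as motivation for the paper's own Theorem~\ref{thm:main}, the $\SL_2(\Z)$ analogue. There is therefore no proof here to compare your attempt against.

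It may still help to compare your outline with how the paper proves its analogous result. The paper does \emph{not} split $f$ into $f^+ + f^- + f^0$ by Legendre symbol and then try to kill $f^-$ via theta-cycle bookkeeping. Instead it (i) reduces the square classes to $t_i\in\{1,\ell\}$ by a support argument (Proposition~\ref{Proposition1}, resting on \cite[Thm~3.1]{Ahlgren-Choi-Rouse}), and then (ii) for the prime-to-$\ell$ part invokes the Bruinier--Ono Hecke congruence (Theorem~\ref{Bruinier-Ono}): because the surviving exponents are all squares, $f$ is congruent to a Hecke eigenform with explicit eigenvalue at each $p\not\equiv 0,1\pmod\ell$, and this multiplicativity forces $a(n^2)\equiv a(1)\big(\tfrac{12}{n}\big)n^{\lambda}$ for enough $n$ that a Sturm bound comparison with $\Theta^{\lambda/2}(\eta)$ finishes the job. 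The $\ell$-part is handled separately by $U_\ell$ and a weight-reduction lemma.

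The step you yourself flag as the obstacle is a real one. The identity $\Theta^{(\ell-1)/2}f^\pm\equiv\pm f^\pm$ only says that $f^+$ and $f^-$ lie in closed theta-cycles, and both eigenvalues $\pm 1$ are compatible with such cycles; filtration growth alone does not distinguish them. Ruling out the $-1$ eigenvalue typically requires a Jochnowitz-type analysis of the low points of the cycle together with parity information coming from elsewhere (in the plus-space setting, ultimately from the Shimura lift or from Hecke relations). The Bruinier--Ono route used in the paper bypasses this entirely by extracting multiplicative structure directly, which is why that method, rather than theta-cycle closure, drives both the cited result and the paper's own theorem.
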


 In this paper, we study the analogous question for half-integral weight modular forms on $\SL_{2}(\Z)$. Before we state our main result, we introduce some notation. If $\lambda \geq 0$ is an integer, $N$ is a positive integer, and $\nu$ is a multiplier system on $\Gamma_0(N)$ in weight $\lambda+\frac{1}{2}$, we denote by $S_{\lambda+\frac{1}{2}}(N, \nu)$ the space of cusp forms of weight $\lambda+\frac{1}{2}$ and multiplier $\nu$ on $\Gamma_0(N)$ (details will be given in the next section). Let $\nu_{\eta}$ be the multiplier for the Dedekind eta function defined in \eqref{etamultiplier}. With this notation, we prove the following theorem.

 \begin{theorem}\label{thm:main}
 Suppose that $\ell \geq 5$ is prime and that $K$ is a number field. Fix an embedding of $K$ into $\C$ and a prime  $v$ of $K$ above $\ell$. 
 Let $\mathcal{O}_{v}$ denote the ring of $v$-integral elements in $K$. Suppose that $\lambda$ is a non-negative integer satisfying $\lambda +\frac{1}{2} < \frac{\ell^{2}}{2}$. Suppose that $r$ is a positive integer with $(r,6)=1$ and that $f \in S_{\lambda+\frac{1}{2}}(1, \nu^{r}_{\eta}) \cap \mathcal{O}_{v}[[q^{\frac{r}{24}}]]$ satisfies
 
 \[
 f \equiv \sum_{i=1}^{m}\sum_{n=1}^{\infty}a(t_{i}n^2)q^{\frac{t_{i}n^{2}}{24}}  \not \equiv 0 \pmod v,
 \]
 where each $t_{i}$ is a square-free positive integer. Then one of the following is true.
 
 \begin{enumerate}
 \item 
 $f \equiv a(1)\displaystyle \sum_{n=1}^{\infty}\(\mfrac{12}{n}\)n^{\lambda}q^{\frac{n^2}{24}} \pmod v $.

In this case, $r \equiv 1 \pmod{24}$ and $\lambda$ is even.
 
  \item
  $f \equiv \displaystyle a(\ell)\sum_{n=1}^{\infty}\(\mfrac{12}{n}\)q^{\frac{\ell n^{2}}{24}} \pmod v $.
 
 In this case, 
  $r \equiv \ell \pmod{24}$ and $\lambda \equiv \frac{\ell-1}{2} \pmod{\ell-1}$.

 \item
  $f \equiv a(1)\displaystyle \sum_{n=1}^{\infty}\(\mfrac{12}{n}\)n^{\lambda}q^{\frac{n^2}{24}}+a(\ell)\sum_{n=1}^{\infty}\(\mfrac{12}{n}\)q^{\frac{\ell n^{2}}{24}} \pmod v $, 
  where $a(1) \not \equiv 0 \pmod{v}$ and $a(\ell) \not \equiv  0 \pmod{v}$.
In this case, $r \equiv \ell \equiv ~1 \pmod{24}$ and $\lambda \equiv \frac{\ell -1}{2} \pmod{\ell-1}$.  
 \end{enumerate}
 
 \end{theorem}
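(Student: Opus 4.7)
The plan is to reduce to classical integer-weight forms on $\SL_{2}(\Z)$ by dividing out by $\eta^{r}$, and then to exploit the $\Theta$-operator and Serre--Swinnerton-Dyer filtration theory modulo $\ell$. Since $(r,6)=1$ in particular forces $r$ odd, $\eta^{r} \in S_{r/2}(1, \nu_{\eta}^{r})$ is non-vanishing on the upper half plane, so $g := f/\eta^{r}$ is a level-$1$ holomorphic form of trivial multiplier and integer weight $k = \lambda + (1-r)/2$. Because $f \in \mathcal{O}_{v}[[q^{r/24}]]$ and $q^{-r/24}\eta^{r}$ is a unit in $\mathcal{O}_{v}[[q]]$, we have $g \in M_{k}(\SL_{2}(\Z)) \cap \mathcal{O}_{v}[[q]]$ and $g \not\equiv 0 \pmod{v}$; this forces $k$ to be a non-negative even integer and reduces the problem to understanding which such $g$ make $\eta^{r} g$ satisfy the square-class support condition.

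The main tool is $\Theta = q\, d/dq$ together with Fermat's little theorem. If $F \equiv \sum_{i}\sum_{n} a(t_{i} n^{2}) q^{t_{i} n^{2}/24} \pmod{v}$ with the $t_{i}$ squarefree, then
\[
\Theta^{(\ell-1)/2} F \; \equiv \; \sum_{\ell \, \nmid \, t_{i}} \left(\tfrac{t_{i}}{\ell}\right) \sum_{\ell \, \nmid \, n} a(t_{i} n^{2}) q^{t_{i} n^{2}/24} \pmod{v}.
\]
A suitable linear combination of $F$ and $\Theta^{(\ell-1)/2} F$ therefore isolates the eigen-components of $F$ supported on square classes $t_{i}$ with prescribed Legendre symbol $\left(\tfrac{t_{i}}{\ell}\right) \in \{-1,0,1\}$, and each such component inherits the square-class condition. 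It suffices to prove the conclusion for each of these components separately and then reassemble.

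Finally I would invoke the filtration theory $\omega(\cdot)$ for level-$1$ forms modulo $\ell$, transported from $g$ to $f$ via multiplication by $\eta^{r}$. The key inequality $\omega(\Theta F) \leq \omega(F) + \ell + 1$, with equality unless $\ell \mid \omega(F)$, constrains the $\Theta$-cycle of each eigen-component to a single residue class modulo $\ell - 1$. Combined with the weight hypothesis $\lambda + \tfrac{1}{2} < \ell^{2}/2$, which precludes $f$ from lying deep inside its cycle, the only surviving filtration classes are $\tfrac{1}{2}$, the $\Theta$-orbit of $\eta(z) = \sum_{n \geq 1} \left(\tfrac{12}{n}\right) q^{n^{2}/24}$, giving case (1) via $(24\Theta)^{\lambda/2}\eta$; and $\ell/2$, the orbit of $\eta^{\ell} \equiv \eta(\ell z) \pmod{\ell}$ (Atkin's congruence), giving case (2). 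Case (3) arises precisely when both orbits collide, which forces $r \equiv \ell \equiv 1 \pmod{24}$ and $\lambda \equiv (\ell-1)/2 \pmod{\ell-1}$. I expect the main obstacle to be executing the filtration analysis cleanly in the presence of the multiplier $\nu_{\eta}^{r}$: one needs a half-integral analogue of Serre--Swinnerton-Dyer tailored to the module $\eta^{r} \cdot M_{*}(\SL_{2}(\Z))$, and one must match the weight bound $\ell^{2}/2$ precisely to the length of the first $\Theta$-cycle of $\eta$ modulo $\ell$ so as to rule out spurious eigen-components.
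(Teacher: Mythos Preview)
Your proposal has a genuine gap at the most important step: the reduction to the two square classes $t=1$ and $t=\ell$. Applying $\Theta^{(\ell-1)/2}$ separates $f$ only according to the Legendre symbol $\left(\tfrac{t_i}{\ell}\right)\in\{+1,-1,0\}$, not according to the individual $t_i$. Nothing in your outline prevents, say, $t=7$ (a quadratic residue mod $\ell$) from contributing to the ``$+1$'' component alongside $t=1$, and filtration theory alone cannot distinguish these, since the filtration of a form depends on its weight class, not on which square classes support its coefficients. The paper handles this with Proposition~4.1, which passes to an integer-weight form $G_i^{24}$ and invokes a nontrivial result of Ahlgren--Choi--Rouse (their Theorem~3.1) ruling out integer-weight cusp forms supported modulo $v$ on a single residue class $tn$ with $(t,\ell N)=1$ and $t>1$. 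That step ultimately rests on Hecke operators and Galois-theoretic input, and your plan has no substitute for it.

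Even after the reduction to $t\in\{1,\ell\}$, your sketch is missing the mechanism that pins down the coefficients $a(n^2)$. Knowing that $f$ lies in a particular $\Theta$-cycle does not by itself give $a(n^2)\equiv a(1)\left(\tfrac{12}{n}\right)n^{\lambda}$; many forms share a filtration. The paper obtains this via the Bruinier--Ono theorem (Theorem~5.2), which shows that a half-integral weight form supported on one Legendre class is a Hecke eigenform modulo $v$ with prescribed eigenvalues, and then compares the first $(2\ell+1)^2/24$ coefficients against $\Theta^{\bar\lambda/2}\eta$ using Sturm's bound. Finally, the case $2\le\lambda'<\tfrac{\ell-1}{2}$ (Proposition~5.4) is disposed of not by a $\Theta$-cycle count but by taking the Shimura lift to weight $2\lambda'$ and reaching a filtration contradiction there. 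Your filtration sketch would need to reproduce all of this, and as written it does not; in particular the assertion that ``the only surviving filtration classes are $\tfrac12$ and $\ell/2$'' is exactly the content of the theorem and is not something the $\Theta$-cycle inequality $\omega(\Theta F)\le\omega(F)+\ell+1$ delivers on its own.
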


 \begin{remark}
 For an example of case $(1)$ of Theorem~\ref{thm:main}, let $\ell \geq 5$ be prime and $\lambda$ be a nonnegative integer. 
 Lemma~\ref{Lemma1} below implies that there exists a form $f \in S_{(\frac{\lambda}{2})(\ell+1)}(1,\nu_{\eta})$ such that $f\equiv\Theta^{\frac{\lambda}{2}}(\eta)\pmod{\ell}$, where $\Theta$ is the Ramanujan $\Theta$-operator defined in \eqref{RamanujanTheta}. We have
 \[
 f \equiv \sum_{n=1}^{\infty}\(\frac{12}{n}\)n^{\lambda}q^{\frac{n^{2}}{24}} \pmod{\ell}.
 \]
 For an example of case $(2)$ of Theorem~\ref{thm:main}, set $f=\eta^{\ell}$. Since $\eta^{\ell}(z) \equiv \eta(\ell z) \pmod{\ell} $, we have
 \[
 f \equiv \sum_{n=1}^{\infty}\(\frac{12}{n}\)q^{\frac{\ell n^{2}}{24}} \pmod{\ell}.
 \]
  For an example of case $(3)$, suppose that $\ell$ is a prime such that $\ell \equiv 1 \pmod{24}$. Lemma~\ref{Lemma1} implies that there exists a form $g \in S_{(\frac{\ell-1}{4})(\ell+1)+\frac{1}{2}}(1,\nu_{\eta})$ such that 
  $g \equiv \Theta^{\frac{\ell-1}{4}}(\eta) \pmod{\ell}$. Set $f=24^{\frac{\ell-1}{2}}g+\eta^{\ell}E_{\ell-1}^{\frac{\ell-1}{4}}$ . We have
  \[
  f \equiv \sum_{n=1}^{\infty}\(\frac{12}{n}\)n^{\frac{\ell-1}{2}}q^{\frac{n^{2}}{24}}+\sum_{n=1}^{\infty}\(\frac{12}{n}\)q^{\frac{\ell n^{2}}{24}} \pmod{\ell}.
  \]
  For this example, note that $\frac{\ell-1}{4}(\ell+1) \equiv \frac{\ell-1}{2} \pmod{\ell-1}$.
 \end{remark}
 \begin{remark}
 The upper bound on $\lambda$ is sharp. For an example which illustrates this, set $f=\eta^{\ell^{2}}$. Then
 \[
 f \equiv \sum_{n=1}^{\infty}\(\frac{12}{n}\)q^{\frac{\ell^{2}n^{2}}{24}} \pmod{\ell}.
 \]
 Note that we have $\lambda+\frac{1}{2}=\frac{\ell^{2}}{2}$ in this case.
\end{remark}
 The paper is organized as follows. In Section $2$, we give some background results on modular forms of integral and half-integral weight. In Section $3$, we prove some preliminary results. In Section $4$, we make a preliminary reduction for the proof of Theorem~\ref{thm:main}, and in Section $5$, we prove the theorem.
 \section{Background}
Suppose that $k \in \frac{1}{2}\mathbb{Z}$, that $N$ is a positive integer, and that $\chi$ is a Dirichlet character modulo $N$. For a function $f(z)$ on the upper half plane and 
\[
 \gamma =\left(\begin{matrix}a & b \\c & d\end{matrix}\right) \in \GL_{2}^{+}(\Q),
\]
we have the weight $k$ slash operator 
\[
f(z)\sk \gamma := \det(\gamma)^{\frac{k}{2}}(cz+d)^{-k}f\(\frac{az+b}{cz+d}\).
\]

Suppose that $\ell \geq 5$ is prime and that $K$ is a number field. Fix an embedding of $K$ into $\C$ and a prime  $v$ of $K$ above $\ell$.
Let $\mathcal{O}_{v}$ be the ring of $v$-integral elements of $K$.
If $\nu$ is a multiplier system on $\Gamma_0(N)$, 
we denote by $M_{k}(N,\nu)$, $S_{k}(N,\nu)$
and $M_{k}^{!}(N, \nu)$ 
the spaces of modular forms, cusp forms, 
and weakly holomorphic modular forms 
of weight $k$ and multiplier 
$\nu$ on $\Gamma_0(N)$ whose Fourier coefficients are in $\mathcal{O}_{v}$.
When $k$ is an integer and the multiplier $\nu$ is trivial, we write $M_{k}(N)$, $S_{k}(N)$
and $M_{k}^{!}(N)$.
Forms in these spaces satisfy the transformation law

\[
f \sk \gamma= \nu(\gamma)f \ \ \ \text{ for } \ \ \ \gamma = \left(\begin{matrix}a & b \\c & d\end{matrix}\right) \in \Gamma_0(N)
\]
and the appropriate conditions at the cusps of $\Gamma_0(N)$.

Throughout, let $q:=e(z)=e^{2 \pi i z}$. 
We define the eta function by
\[
\eta(z):= q^{\frac{1}{24}}\prod_{n=1}^{\infty}(1-q^{n})
\]
and the theta function by
\[
\theta(z):= \sum_{n=-\infty}^{\infty}q^{n^2}.
\]
The eta function has a multiplier $\nu_{\eta}$ satisfying

\[
\eta(\gamma z)=\nu_{\eta}(\gamma)(cz+d)^{\frac{1}{2}}\eta(z), \ \ \ \ \ \gamma= \left(\begin{matrix}a & b \\c & d\end{matrix}\right) \in \SL_{2}(\Z);
\]
throughout, we choose the principal branch of the square root. For $c>0$, we have the formula \cite[~$\mathsection$$4.1$]{Knopp}

\begin{equation}\label{etamultiplier}
\nu_{\eta}(\gamma)=
 \begin{cases} 
 \(\frac{d}{c}\)e\(\frac{1}{24}((a+d)c-bd(c^2-1)-3c )\), & \text{if } c \text{ is odd,} \\
\(\frac{c}{d}\)e\(\frac{1}{24}((a+d)c-bd(c^2-1)+3d-3-3cd)\) & \text{if } c \text{ is even}.
\end{cases}
\end{equation}
For the multiplier of the theta function we have
\[
\nu_{\theta}(\gamma):= (cz+d)^{-\frac{1}{2}}\frac{\theta(\gamma z)}{\theta(z)}=\(\frac{c}{d}\)\epsilon_{d}^{-1}, \ \ \ \ \ \gamma= \left(\begin{matrix}a & b \\c & d\end{matrix}\right) \in \Gamma_0(4),
\]
where
\[
\epsilon_{d}=
\begin{cases}
1, & \text{if } d \equiv 1 \pmod{4}, \\
i, & \text{if } d \equiv 3 \pmod{4}.
\end{cases}
\]

In the next several paragraphs, we follow the exposition in \cite{Scarcity}.
If $f \in M_{k}(N,\chi\nu_{\eta}^{r})$, then $\eta^{-r}f \in M^{!}_{k-\frac{r}{2}}(N,\chi)$.
This implies that $f$ has a Fourier expansion of the form

\begin{equation}\label{lemma4.2}
f = \sum_{n \equiv r (24)}a(n)q^{\frac{n}{24}}.
\end{equation} 
These facts together imply the following lemma.
\begin{lemma}\label{integerhalfinteger}
Suppose that $0 <r < 24$ is an integer with $(r,6)=1$ and that $f \in S_{k}(N, \chi\nu_{\eta}^{r})$. Then we have 
\[
\eta^{-r}f \in M_{k-\frac{r}{2}}(N,\chi).
\]
\end{lemma}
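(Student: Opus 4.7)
The paragraph immediately preceding the lemma already establishes that $\eta^{-r}f \in M^!_{k-r/2}(N,\chi)$: the multiplier $\nu_\eta^{-r}$ of $\eta^{-r}$ cancels the multiplier $\nu_\eta^r$ of $f$, and $\eta^{-r}f$ is holomorphic on $\mathbb{H}$ because $\eta$ never vanishes there. The substantive content of the lemma is thus to upgrade ``weakly holomorphic'' to ``holomorphic'' by ruling out poles at the cusps of $\Gamma_0(N)$.

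The plan is to treat the cusp $\infty$ first; this is the only cusp when $N=1$ and is what drives the applications to Theorem~\ref{thm:main}. By \eqref{lemma4.2}, $f = \sum_{n \equiv r\,(24)} a(n)\,q^{n/24}$, and since $f$ is a cusp form with $0<r<24$, the congruence $n\equiv r \pmod{24}$ forces $a(n)=0$ for every $n<r$. Hence $f = O(q^{r/24})$, and because $\eta^{-r}=q^{-r/24}\prod_{n\ge 1}(1-q^n)^{-r}$ is $q^{-r/24}$ times a unit power series, $\eta^{-r}f$ has a Fourier expansion in nonnegative integer powers of $q$ and is therefore holomorphic at $\infty$.

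For an arbitrary cusp $s=\gamma\infty$ with $\gamma\in\SL_2(\Z)$ of width $h$, I would set $M=\gamma T^h\gamma^{-1}\in\Gamma_0(N)$ (with $T=\pMatrix{1}{1}{0}{1}$) and $q_h:=e^{2\pi i z/h}$, and then apply the slash operator. Using $\eta\big|_{1/2}\gamma=\nu_\eta(\gamma)\,\eta$, one obtains
\[
(\eta^{-r}f)\big|_{k-r/2}\gamma \;=\; \nu_\eta(\gamma)^{-r}\,\eta^{-r}\,\bigl(f\sk\gamma\bigr).
\]
The $T^h$-quasi-periodicity constrains $f\sk\gamma$ to an expansion $\sum_m d_m\,q_h^{m+\tau}$ with $e^{2\pi i\tau}=\chi\nu_\eta^r(M)$, while $\eta^r\big|_{r/2}\gamma$ begins at $q_h^{rh/24}$. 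The hard part will be matching the fractional parts of $\tau$ and $rh/24$ via the explicit formula \eqref{etamultiplier} and then using cuspidality of $f$ at $s$ to upgrade ``$m+\tau>0$'' to the stronger estimate $m+\tau\ge rh/24$; this requires careful bookkeeping with the $\chi$-contribution at high-width cusps. For the level-$1$ applications in this paper only the cusp $\infty$ is needed, so the second paragraph already suffices for every subsequent use of the lemma.
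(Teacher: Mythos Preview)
Your argument is correct and mirrors the paper exactly: the paper's entire proof is the phrase ``These facts together imply the following lemma,'' where ``these facts'' are precisely the weak holomorphicity $\eta^{-r}f\in M^!_{k-r/2}(N,\chi)$ and the Fourier expansion \eqref{lemma4.2}, and your second paragraph spells out how those two facts give holomorphicity at $\infty$. You are also right that every application of the lemma in this paper is at level $N=1$, so the cusp-$\infty$ argument already covers all subsequent uses; the general-$N$ behaviour at other cusps is not detailed in the paper either (it defers to \cite{Scarcity}).
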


We also have
\[
M_{k}(N,\chi\nu_{\eta}^{r}) \neq \{0\} \ \ \ \ \text{ only if } \ \ \ \ 2k-r \equiv 1-\chi(-1) \pmod{4},
\]
so if $(r,6)=1$, then $k \not \in \mathbb{Z}$.
It follows from a computation using \eqref{etamultiplier} and the formulas

\[
e\(\frac{1-d}{8}\)=\(\frac{2}{d}\)\epsilon_{d} \ \ \text{ and } \ \ \epsilon_{d_1d_2}=\epsilon_{d_1}\epsilon_{d_2}(-1)^{\frac{d_1-1}{2}\frac{d_2-1}{2}}
\]
for odd $d$, $d_1$, and $d_2$ that we have
\begin{equation}\label{passing to Shimura's space}
f(z) \in S_{k}(N,\chi \nu_{\eta}^{r}) \Longrightarrow f(24z) \in S_{k}(576N,\chi\(\tfrac{12}{\bullet}\)\nu_{\theta}^{r}).
\end{equation}
This fact, together with the usual Shimura lift on $S_{k}(576, \(\frac{12}{\bullet}\)\nu_{\theta}^{r})$, allows us to define a Shimura lift $\Sh_{t}$ on $S_{k}(1, \nu_{\eta}^{r})$ for each squarefree integer $t$ with $(t,6)=1$. Its action on Fourier expansions is
\begin{equation}\label{ShimuraCorrespondence}
\Sh_{t}\(\sum a(n)q^{\frac{n}{24}}\)= \sum A_{t}(n)q^{n},
\end{equation}
where the coefficients $A_{t}(n)$ are given by
\begin{equation}\label{ShimuraCorrespondenceCoefficients}
A_{t}(n)= \sum_{d \mid n}\(\mfrac{-1}{d}\)^{k-\frac{1}{2}}\(\mfrac{12t}{d}\)d^{k-\frac{3}{2}}a\(\frac{tn^{2}}{d^{2}}\).
\end{equation}
The work of Shimura \cite{Shimura} and Niwa \cite{Niwa} shows that, for each squarefree integer $t$ with $(t,6)=1$, we have
\[
f \in S_{k}(1,\nu_{\eta}^{r}) \implies \Sh_{t}(f) \in S_{2k-1}(288).
\]

We make use of Hecke operators on the spaces we will consider. If $k$ is an integer, we denote the Hecke operator on $S_{k}(N)$ by $T(p,k,1)$. If $k \in \frac{1}{2}\Z\setminus \Z$ and $4 \mid N$ and $r$ is a positive integer with $(r,6)=1$, we denote the Hecke operator on $S_{k}(N,\chi\(\frac{12}{\bullet}\)\nu_{\theta}^{r})$ by $T(p^{2},k, \chi)$ (these forms are half-integral weight forms in the sense of \cite{Shimura}). 

We next recall the $U$ and $V$ operators. For a positive integer $m$, we define them on Fourier expansions by
\[
\(\sum_{n=1}^{\infty}a(n)q^{\frac{n}{24}}\)\sl U_{m}:= \sum_{n=1}^{\infty}a(mn)q^{\frac{n}{24}},
\]

\[
\(\sum_{n=1}^{\infty}a(n)q^{\frac{n}{24}}\)\sl V_{m}:= \sum_{n=1}^{\infty}a(n)q^{\frac{mn}{24}}.
\]
The following facts appear as Lemma $2.1$ in \cite{Scarcity}.

\begin{lemma}\label{UandVproperties}
Suppose that $r$ is a positive integer with $(r,6)=1$, that $f \in M_{k}(N,\chi\nu_{\eta}^{r})$, and that $m$ is a positive integer. Then
\[
f\sl U_{m}=m^{\frac{k}{2}-1}\sum_{t \ (m)}f\sk \left(\begin{matrix}1 & 24t \\0 & m\end{matrix}\right) \ \ \ \text{ and } \ \ \ f\sl V_{m}=m^{-\frac{k}{2}}f\sk \left(\begin{matrix}m & 0 \\0 & 1\end{matrix}\right).
\]
\end{lemma}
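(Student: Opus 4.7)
The plan is to verify both identities at the level of $q$-expansions, using the expansion in \eqref{lemma4.2} that any $f\in M_k(N,\chi\nu_\eta^r)$ has the form $f = \sum_{n\equiv r\,(24)} a(n)q^{n/24}$.

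For the $V_m$ formula, I would compute directly from the definition of the slash operator: $f\sk\pMatrix{m}{0}{0}{1} = m^{k/2}\cdot 1^{-k}f(mz) = m^{k/2}\sum_{n\equiv r\,(24)}a(n)q^{mn/24}$, which after multiplying by $m^{-k/2}$ is exactly $f\sl V_m$ as defined. This is essentially a one-line check.

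For $U_m$, I would apply the slash operator to the matrix $\pMatrix{1}{24t}{0}{m}$ of determinant $m$ to get
\[
f\sk\pMatrix{1}{24t}{0}{m} = m^{k/2}\cdot m^{-k}f\!\left(\tfrac{z+24t}{m}\right) = m^{-k/2}\sum_{n\equiv r\,(24)} a(n)\,e^{2\pi i nt/m}\,q^{n/(24m)}.
\]
Here the key point is that $\exp(2\pi i n\cdot 24t/(24m)) = \exp(2\pi i nt/m)$ is an $m$-th root of unity depending only on $n\bmod m$. Summing over $t\bmod m$ and invoking orthogonality $\sum_{t\,(m)} e^{2\pi i nt/m} = m\cdot\mathbf{1}_{m\mid n}$, the sum collapses to
\[
\sum_{t\,(m)}f\sk\pMatrix{1}{24t}{0}{m} = m^{1-k/2}\sum_{\substack{n\equiv r\,(24)\\ m\mid n}} a(n)\,q^{n/(24m)}.
\]
Re-indexing $n = mn'$ (with $n'\equiv r m^{-1}\bmod 24/\gcd(m,24)$, which is compatible since $(r,6)=1$ forces the surviving terms to align with the expected expansion of $f\sl U_m$), this becomes $m^{1-k/2}\sum a(mn')q^{n'/24}$. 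Rearranging gives the claimed formula.

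The main obstacle is purely bookkeeping: tracking the $1/24$ denominator in the Fourier exponents through the slash-operator computation, and convincing oneself that the congruence conditions on the surviving indices match the support of $f\sl U_m$. Once one fixes the convention that $U_m$ acts on the ``level 24'' exponents (so that it keeps only indices $n$ with $m\mid n$ and relabels them as $n/m$), both identities are routine once the orthogonality step for $U_m$ is in place.
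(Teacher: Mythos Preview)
Your argument is correct; this is exactly the standard verification via $q$-expansions and orthogonality of $m$-th roots of unity. The paper itself does not give a proof of this lemma but simply quotes it as Lemma~2.1 of \cite{Scarcity}, so there is nothing further to compare.
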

A computation using Lemma~\ref{UandVproperties} and \eqref{etamultiplier} implies that if $(r,6)=1$ and $p \geq 5$ is prime, then
\begin{equation}\label{Up}
U_p:M_{k}(N,\chi\nu_{\eta}^{r}) \rightarrow M_{k}\(N\frac{p}{(N,p)},\chi\chi_{p}\nu_{\eta}^{pr}\),
\end{equation}

\begin{equation}\label{Vp}
V_p:M_{k}(N,\chi\nu_{\eta}^{r}) \rightarrow M_{k}\(Np,\chi\chi_{p}\nu_{\eta}^{pr}\).
\end{equation}
Denote by $\chi_{p}=\(\frac{\bullet}{p}\)$ the quadratic character of modulus $p$ and by $\chi_{p}^{\text{triv}}$ the trivial character with modulus $p$. If $(r,6)=1$ and if $\psi= \chi_{p} \text{ or } \chi_{p}^{\text{triv}}$, we define the twist of $f \in M_{k}(N, \chi\nu_{\eta}^{r})$ with Fourier expansion $f= \displaystyle\sum_{n=1}^{\infty}a(n)q^{\frac{n}{24}}$ by

\begin{equation}\label{twist}
f \otimes \psi:= \sum_{n=1}^{\infty}\psi(n)a(n)q^{\frac{n}{24}}.
\end{equation}

If $p \geq 5$ is prime, the fact that $p^{2} \equiv 1 \pmod{24}$ together with \eqref{Up} and \eqref{Vp} implies that we have

\begin{equation}\label{chi TRIV}
f \in M_{k}(N,\chi\nu_{\eta}^{r}) \implies f\otimes \chi_{p}^{\text{triv}} \in M_{k}(Np^{2},\chi\nu_{\eta}^{r}).
\end{equation}
We also have
\[
f \otimes \chi_{p}=\frac{1}{\epsilon_{p}\sqrt{p}}\sum_{t \ (p)}\chi_{p}(t)f\sk \left(\begin{matrix}1 & \frac{24t}{p} \\0 & 1\end{matrix}\right).
\]
A similar computation as for \eqref{Up} and \eqref{Vp} shows that
\begin{equation}\label{chi P}
f \otimes \chi_{p} \in M_{k}(Np^{2},\chi\nu_{\eta}^{r}).
\end{equation}

We next review some facts about the algebra of modular forms modulo $\ell$ and filtrations. Suppose that $\ell \geq 5$ is prime, that $K$ is a number field, and that $v$ is a prime ideal of $K$ above $\ell$. Let $\mathcal{O}_{v}$ be the ring of $v$-integral elements of $K$, and let the residue field be $\mathbb{F}_{v}$.
For Fourier expansions $\sum a(n)q^{\frac{n}{24}} \in \mathcal{O}_{v}[[q^{\frac{1}{24}}]]$, 
we define $\bar{f}:= \sum \bar{a(n)}q^{\frac{n}{24}} \in \mathbb{F}_{v}[[q^{\frac{1}{24}}]]$, 
 and we define 

\[
\bar{M_{k}(N, \nu)}:= \{\bar{f}: \ \ f \in M_{k}(N,\nu)\},
\]

\[
\bar{S_{k}(N, \nu)}:=\{\bar{f}:\ \ f \in S_{k}(N,\nu)\}.
\]
If $k$ is an integer and $f \in M_{k}(N)$, then we define the filtration of $f$ to be 

\[
\omega(f)=\omega(\bar{f}):= \inf\{k': \ \text{there exists } g \in M_{k'}(N)\text{ with } \bar{f}=\bar{g}\}.
\]
 If $f \in M_{k}(N, \nu)$, we define
\begin{equation}\label{RamanujanTheta}
\Theta:=\frac{1}{2\pi i}\frac{d}{dz}=q\frac{d}{dq}.
\end{equation}
We make use of the following facts from \cite[~$\mathsection$$4$]{Gross}.
\begin{proposition}\label{Gross}
Suppose that $N$ is a positive integer, that $k$ is an integer, that $\ell \geq 5$ is prime with $\ell \nmid N$, and that $f \in M_{k}(N)$. Then
\begin{enumerate}
\item
There exists a form $g \in M_{k+\ell+1}(N)$ with $\bar{g}=\bar{\Theta(f)}$.
\item
$\omega(\Theta(f)) \leq \omega(f)+\ell+1$, with equality if and only if $\ell \nmid \omega(f)$.
\item
$\omega(f^{i})=i\omega(f)$ for all $i \geq 1$.
\item
If $g \in M_{k'}(N)$ has $\bar{f}=\bar{g}\neq 0$, then $k \equiv k' \pmod{\ell-1}$.
\end{enumerate}
\end{proposition}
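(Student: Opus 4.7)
My plan is to combine the Shimura lift with the filtration theory for integer-weight modular forms modulo $\ell$ from Proposition~\ref{Gross}, and then pin down the form by Hecke eigenvalue analysis.

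As a preliminary reduction, \eqref{lemma4.2} shows that the Fourier coefficients of $f$ vanish unless $n \equiv r \pmod{24}$. In the expansion $f \equiv \sum_i \sum_n a(t_i n^2) q^{t_i n^2/24}$, this forces $t_i n^2 \equiv r \pmod{24}$ for every index with $a(t_i n^2) \not\equiv 0$. Since each $t_i$ is squarefree and $n^2 \equiv 1 \pmod{24}$ whenever $(n,6) = 1$, a short case analysis forces $t_i \equiv r \pmod{24}$, so all the $t_i$ lie in a single residue class mod $24$ determined by $r$.

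The heart of the proof is passing to integer weight. For each squarefree $t$ coprime to $6$, the Shimura lift $\Sh_t(f)$ lies in $S_{2\lambda}(288)$. By \eqref{ShimuraCorrespondenceCoefficients}, a coefficient $A_t(n)$ is nonzero modulo $v$ only if some $a(tn^2/d^2) \not\equiv 0 \pmod v$; since $t$ and the $t_i$ are squarefree, this requires $t = t_i$ for some $i$. Hence $\Sh_t(f) \equiv 0 \pmod v$ whenever $t \notin \{t_1, \ldots, t_m\}$. I would then invoke Proposition~\ref{Gross}: the mod-$\ell$ reductions $\overline{\Sh_{t_i}(f)}$ have weight $2\lambda$, and the hypothesis $2\lambda < \ell^2 - 1$ prevents iterated $\Theta$-operators from collapsing filtrations cyclically modulo $\ell - 1$. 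Combining this with the twisting operators \eqref{chi TRIV} and \eqref{chi P}, which allow one to isolate individual square classes by separating coefficients into residue classes modulo auxiliary primes, the goal is to show that the only square classes appearing are $t_i \in \{1, \ell\}$.

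Once $\{t_i\} \subseteq \{1, \ell\}$, Hecke-eigenvalue analysis on the Shimura lifts, matched against the explicit forms $\Theta^{\lambda/2}(\eta)$ and $\eta^\ell$ from the remark after Theorem~\ref{thm:main}, should determine the coefficient sequences $n \mapsto a(n^2)$ and $n \mapsto a(\ell n^2)$ up to the scalars $a(1)$ and $a(\ell)$; the resulting shapes match cases (1)--(3). The parity and residue conditions on $\lambda$ and the residue of $r \pmod{24}$ follow from the multiplier-system constraint $2k - r \equiv 1 - \chi(-1) \pmod 4$ noted after Lemma~\ref{integerhalfinteger} and from the residue-class statement in Proposition~\ref{Gross}(4).

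The main obstacle is the filtration step above: ruling out exotic square classes $t_i \notin \{1, \ell\}$ requires tracking how filtrations evolve under Shimura lifts and twists, and it is precisely the sharp bound $\lambda + \tfrac{1}{2} < \ell^2/2$ that prevents $\Theta$-iteration from wrapping around modulo $\ell - 1$. A secondary challenge is the bookkeeping needed to match the value of $r \pmod{24}$ to the correct case among (1), (2), (3).
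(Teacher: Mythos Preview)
You have written a proof proposal for the wrong statement. The statement you were asked to address is Proposition~\ref{Gross}, which is a collection of standard structural facts about filtrations of integer-weight modular forms modulo $\ell$: (1) that $\Theta$ raises weight by $\ell+1$ modulo $\ell$, (2) the exact behaviour of $\omega$ under $\Theta$, (3) multiplicativity of $\omega$ under powers, and (4) that congruent forms have weights congruent modulo $\ell-1$. Your proposal instead sketches an approach to Theorem~\ref{thm:main}, the main theorem of the paper, invoking Shimura lifts, the square classes $t_i$, the cases (1)--(3), and the bound $\lambda+\tfrac{1}{2}<\ell^2/2$. None of that is relevant to Proposition~\ref{Gross}, and in fact your sketch \emph{uses} Proposition~\ref{Gross} as a tool rather than proving it.

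In the paper itself, Proposition~\ref{Gross} is not proved at all; it is quoted from \cite[\S4]{Gross} as a known result. A genuine proof would proceed along the Serre--Swinnerton-Dyer--Katz line: one identifies the graded algebra of mod-$\ell$ modular forms on $\Gamma_0(N)$, observes that $E_{\ell-1}$ reduces to the Hasse invariant (giving (4)), constructs the derivation $\Theta$ via $12\Theta(f)=12f'-kE_2f$ corrected by $E_{\ell+1}\equiv E_2$ (giving (1)), and computes the leading term to see when the filtration genuinely jumps by $\ell+1$ (giving (2)); item (3) follows from the description of the kernel of multiplication by the Hasse invariant. Your proposal contains none of these ingredients.
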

Finally, if $k \geq 2$ is an even integer, we denote the weight $k$ Eisenstein series by $E_k$.

\section{Preliminary Results}
We record here some preliminary results which we require for the proof of Theorem~\ref{thm:main}.

\begin{lemma}\label{Lemma1}
Let $\ell \geq 5$ be a prime and $K$ is a number field. Suppose that $r$ is a positive integer with $(r,6)=1$ and that $f\in S_{\lambda+\frac{1}{2}}(1, \nu^{r}_{\eta})$. Then there is a form $F \in S_{\lambda+\ell+1+\frac{1}{2}}(1, \nu^{r}_{\eta})$
such that $F \equiv \Theta (f) \pmod{\ell}$.
\end{lemma}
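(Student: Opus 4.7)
The plan is to adapt the classical argument of Serre and Swinnerton-Dyer that gives Proposition~\ref{Gross}(1) in integer weight to the half-integral weight setting with multiplier $\nu_{\eta}^{r}$. Setting $k = \lambda + \tfrac{1}{2}$, the key observation is that although $\Theta f$ is not itself modular, the correction $\Theta f - \tfrac{k}{12}E_{2}f$ is modular, and modulo $\ell$ one can then replace $E_{2}$ by the genuine modular form $E_{\ell+1}$.

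The first step is to show that
\[
G := \Theta f - \frac{k}{12}\, E_{2}\, f
\]
lies in $S_{k+2}(1,\nu_{\eta}^{r})$. To verify the transformation law, one differentiates $f(\gamma z) = \nu_{\eta}^{r}(\gamma)(cz+d)^{k} f(z)$ in $z$, producing a correction term involving $kc(cz+d)^{k+1}/(2\pi i)$, and checks that this is exactly cancelled by the quasi-modular transformation $E_{2}(\gamma z) = (cz+d)^{2} E_{2}(z) + 12c(cz+d)/(2\pi i)$. Holomorphy on the upper half plane is immediate, and vanishing at the cusp follows from the cuspidality of $f$ together with $E_{2} = 1 + O(q)$.

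The second step uses the congruences $E_{\ell+1} \equiv E_{2} \pmod{\ell}$ and $E_{\ell-1} \equiv 1 \pmod{\ell}$. The former follows from Fermat's little theorem $\sigma_{\ell}(n) \equiv \sigma_{1}(n) \pmod{\ell}$ together with the Kummer congruence $B_{\ell+1}/(\ell+1) \equiv B_{2}/2 = 1/12 \pmod{\ell}$ (valid because $\ell-1 \nmid \ell+1$ for $\ell \geq 5$); in particular $E_{\ell+1}$ has $v$-integral Fourier coefficients. Combining these gives
\[
\Theta f \equiv G \cdot E_{\ell-1} + \frac{k}{12}\, E_{\ell+1}\, f \pmod{v},
\]
and the right-hand side lies in $S_{k+\ell+1}(1,\nu_{\eta}^{r}) = S_{\lambda+\ell+1+\frac{1}{2}}(1,\nu_{\eta}^{r})$, providing the desired $F$. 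I expect the main obstacle to be the bookkeeping in the first step, namely tracking the multiplier $\nu_{\eta}^{r}$ through the differentiation and ensuring that the rational constant $k/12 = (2\lambda+1)/24$ remains $v$-integral, which it does since $\ell \geq 5$.
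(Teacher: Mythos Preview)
Your argument is correct and takes a genuinely different route from the paper's. You adapt the Serre--Swinnerton-Dyer construction directly in half-integral weight: the Serre derivative $G=\Theta f-\tfrac{k}{12}E_{2}f$ lies in $S_{k+2}(1,\nu_{\eta}^{r})$ (the multiplier $\nu_{\eta}^{r}$ passes through the differentiation unchanged, and the $E_{2}$ correction term cancels the extra $kc(cz+d)^{k+1}/(2\pi i)$ exactly as in integer weight), and then $F=G\,E_{\ell-1}+\tfrac{k}{12}E_{\ell+1}f$ does the job. The integrality checks you flag ($k/12=(2\lambda+1)/24$ and the coefficients of $E_{\ell\pm1}$ being $v$-integral for $\ell\geq5$) are all fine.

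The paper instead reduces to the known integer-weight statement (Proposition~\ref{Gross}(1)): it multiplies $f$ by $\eta^{\ell j}$ with $\ell j+r\equiv 0\pmod{24}$ to land in $S_{\lambda+\frac{\ell j}{2}+\frac{1}{2}}(1)$ with trivial multiplier, applies Proposition~\ref{Gross}(1) there (using $\Theta(\eta^{\ell j}f)\equiv\eta^{\ell j}\Theta(f)\pmod{\ell}$), and then divides back by $\eta^{\ell j}$, after first subtracting $\ell$-multiples of an echelon basis to force enough vanishing at the cusp for the quotient to be holomorphic. Your approach is cleaner and self-contained, avoiding the basis-subtraction step entirely. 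The paper's approach, on the other hand, packages the passage between $\nu_{\eta}^{r}$-multiplier forms and integer-weight forms in a way that is reused verbatim in the proof of Lemma~\ref{Lemma2}, so it earns its keep as a template.
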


\begin{proof}
Choose an integer $j$ such that $\ell j+r \equiv 0 \pmod{24}$. Define

\[
g:= \eta^{\ell j}f \in S_{\lambda+\frac{\ell j}{2}+\frac{1}{2}}(1).
\]
There exists a form $G \in S_{\lambda+\frac{\ell j}{2}+\ell+1+\frac{1}{2}}(1)$ such that

\[
G \equiv \Theta (g) \equiv \eta^{\ell j}\Theta (f) \pmod{\ell}.
\]
Let $d=\dim(S_{\lambda+\frac{\ell j}{2}+\ell+1+\frac{1}{2}}(1))$. The space $S_{\lambda+\frac{\ell j}{2}+\ell+1+\frac{1}{2}}(1)$ has a basis $\{f_1,...,f_d\}$ with integer coefficients with the property 
\begin{equation}\label{eqn:INTEGRALITY}
f_i(z)=q^{i}+O(q^{i+1}),  \ \ \ \ \ \ \ \ \ \ \ \ \ \ \ \ \ 1 \leq i \leq d.
\end{equation}
After subtracting an appropriate linear combination of these basis elements from $G$, we may assume that $F:= \frac{G}{\eta^{\ell j}} \in S_{\lambda+\ell+1+\frac{1}{2}}(1,\nu^{r}_{\eta})$. We have

\[
F \equiv \Theta (f) \pmod{\ell}.
\]

\end{proof}

\begin{lemma}\label{Lemma2}
Suppose that $\ell \geq 5$ is prime, that $K$ is a number field which is Galois over $\mathbb{Q}$, and that $v$ is a prime of $K$ above $\ell$. Suppose that $\lambda$ is a non-negative integer, that $r$ is a positive integer with $(r,6)=1$, and that $g \in S_{\lambda+\frac{1}{2}}(1, \nu_{\eta}^{r})$ satisfies

\[
g \equiv \sum_{n=1}^{\infty}a(n)q^{\frac{\ell n}{24}} \not \equiv 0 \pmod{v}.
\]
Then there exists $\lambda' \geq 0$ with $\lambda'+\frac{1}{2} \leq \frac{1}{\ell}(\lambda+\frac{1}{2})$, and a form $f \in S_{\lambda'+\frac{1}{2}}(1, \nu_{\eta}^{r\ell})$ such that
\[
f \equiv  g\sl U_{\ell} \equiv  \sum_{n=1}^{\infty}a(n)q^{\frac{n}{24}}  \pmod v.
\]

\end{lemma}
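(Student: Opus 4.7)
The plan is to view the hypothesis as saying that $g$ modulo $v$ is, up to Galois conjugation of coefficients, an $\ell$-th power. Setting $G := g\sl U_\ell$, equation \eqref{Up} places $G \in S_{\lambda+\frac{1}{2}}(\ell, \chi_\ell \nu_\eta^{r\ell})$, and the hypothesis rewrites as the Fourier-series congruence $g(z) \equiv G(\ell z) \pmod v$. The task is then to produce $f \in S_{\lambda'+\frac{1}{2}}(1, \nu_\eta^{r\ell})$ with $f \equiv G \pmod v$ and $\lambda' + \frac{1}{2} \leq (\lambda + \frac{1}{2})/\ell$.

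The key tool is the Frobenius congruence $\eta(z)^\ell \equiv \eta(\ell z) \pmod \ell$, which is the prototype of the desired relation $f^\ell \equiv g \pmod v$. For any mod-$v$ modular form $h = \sum c(n) q^{n/24}$, one has $h^\ell \equiv \sum c(n)^\ell q^{\ell n/24} \pmod \ell$, where $x \mapsto x^\ell$ is the Frobenius on $\mathbb{F}_v$. Thus the hypothesis makes $g$ look, modulo $v$, like the $\ell$-th power of a form of weight $(\lambda + \frac{1}{2})/\ell$, whose coefficients are Frobenius-inverse images of the $a(n)$. I would construct such an $f$ by: (i) multiplying by a suitable power of $\eta$ (with exponent chosen so that the combined multiplier becomes trivial) to reduce to a level $1$, integer-weight weakly holomorphic form; (ii) invoking the standard level-$1$ integer-weight ``Frobenius root'' principle, which says a mod-$v$ level-$1$ form whose Fourier expansion is supported on multiples of $\ell$ arises as an $\ell$-th power of a form of weight $1/\ell$ the original, using the filtration identity $\omega(H^\ell) = \ell\,\omega(H)$ from Proposition~\ref{Gross}(3); and (iii) multiplying back by an appropriate power of $\eta$ to recover the half-integer weight and the multiplier $\nu_\eta^{r\ell}$. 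The weight bound is then a direct consequence of the filtration identity.

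The principal obstacle is handling the Galois action. Over the residue field $\mathbb{F}_v$, the Frobenius $x \mapsto x^\ell$ is non-trivial unless $\mathbb{F}_v = \mathbb{F}_\ell$, so one cannot directly identify the coefficients of $h^\ell$ with those of $h\sl V_\ell$ as Fourier series. The hypothesis that $K/\mathbb{Q}$ is Galois provides $\sigma \in \Gal(K/\mathbb{Q})$ lifting Frobenius at $v$; the conjugate $g^\sigma$ is another characteristic-zero modular form of the same weight, and its reduction modulo $v$ transforms under $\SL_2(\mathbb{Z})$ with multiplier $\nu_\eta^{r\ell}$, because $\sigma$ acts on $\zeta_{24}$ by $\zeta_{24} \mapsto \zeta_{24}^\ell$ modulo $v$ and $\nu_\eta$ takes values in $24$th roots of unity. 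Performing the Frobenius-root construction equivariantly with respect to $\sigma$, and keeping track of the shift in the multiplier from $\nu_\eta^r$ to $\nu_\eta^{r\ell}$ that accompanies it, is the delicate technical step of the proof.
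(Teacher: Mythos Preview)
Your overall strategy---multiply by a power of $\eta$ to pass to integer weight on $\SL_2(\Z)$, extract an ``$\ell$-th root'' via the filtration identity of Proposition~\ref{Gross}(3), then divide back by a power of $\eta$---is exactly the approach taken in the paper. Two points in your outline, however, need correction.

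First, in step (i) the exponent on $\eta$ must be a multiple of $\ell$: one takes $\eta^{\ell j}$ with $\ell j + r \equiv 0 \pmod{24}$ (possible since $(\ell,24)=1$). With an arbitrary exponent $m\equiv -r\pmod{24}$ the product $\eta^{m}g$ would indeed have trivial multiplier, but it would \emph{not} be supported on multiples of $\ell$ modulo $v$, because $\eta^{m}$ itself is not; only $\eta^{\ell j}\equiv(\eta\sl V_\ell)^{j}\pmod{\ell}$ preserves that support condition. Setting $h=\eta^{\ell j}g$, the paper then uses that $U_\ell$ agrees with the Hecke operator $T(\ell)$ modulo $v$ on integer-weight level-$1$ cusp forms, so that $\overline{h\sl U_\ell}\in\overline{S_{\lambda+\frac{\ell j}{2}+\frac12}(1)}$; this is what keeps the construction at level $1$ rather than level $\ell$.

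Second, your account of why the multiplier shifts from $\nu_\eta^{r}$ to $\nu_\eta^{r\ell}$ is mistaken. Galois conjugation of Fourier coefficients does \emph{not} change the multiplier: the space $S_{\lambda+\frac12}(1,\nu_\eta^{r})$ has a $\Q$-rational basis, so $g^{\sigma}$ lies in the same space with the same multiplier $\nu_\eta^{r}$. The shift arises instead from the asymmetry between multiplying by $\eta^{\ell j}$ in step (i) and dividing only by $\eta^{j}$ in step (iii). Concretely, $\overline{h\sl U_\ell}=\overline{\eta^{j}(g\sl U_\ell)}$, so if $H$ is a low-weight level-$1$ form with $\overline{H}=\overline{h\sl U_\ell}$ then $f:=H/\eta^{j}$ carries multiplier $\nu_\eta^{-j}$; since $\ell j\equiv -r\pmod{24}$ and $\ell^{2}\equiv 1\pmod{24}$, one has $-j\equiv r\ell\pmod{24}$, i.e.\ $\nu_\eta^{-j}=\nu_\eta^{r\ell}$. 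The Frobenius $\sigma$ enters only through the power-series identity $\overline{h^{\sigma}}=(\overline{h\sl U_\ell})^{\ell}$ in $\mathbb{F}_v[[q^{1/24}]]$, which is precisely what feeds into Proposition~\ref{Gross}(3) to bound $\omega(\overline{h\sl U_\ell})$ and hence the weight $\lambda'+\tfrac12$.
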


\begin{proof}
Choose an integer $j>0$ such that $\ell j+r \equiv 0 \pmod{24}$, and define 
\[
h:= \eta^{\ell j}g \in S_{\lambda+\frac{\ell j}{2}+\frac{1}{2}}(1).
\]
Suppose that $x \in \mathcal{O}_{v}$ and  that $\sigma \in \Gal(K/\mathbb{Q})$ is a Frobenius automorphism for the prime $v$. Then we have $x^{\sigma} \in \mathcal{O}_{v}$ and 

\[
x^{\sigma} \equiv x^{\ell} \pmod{v}.
\]
Note that $\sigma$ preserves the space $S_{\lambda+\frac{\ell j}{2}+\frac{1}{2}}(1)$. Since $U_{\ell}$ acts as  
$T(\ell, \lambda+\frac{\ell j}{2}+\frac{1}{2},1)$ modulo $v$, we see that $\bar{h\sl U_{\ell}} \in \bar{S_{\lambda+\frac{\ell j}{2}+\frac{1}{2}}(1)}$. We have
\[
\bar{h^{\sigma}}=(\bar{h\sl U_{\ell}})^{\ell}.
\]
By $(4)$ of Proposition~\ref{Gross}, we know that there exists an integer $\beta \geq 0$ such that

\[
k:= \omega(\bar{h\sl U_{\ell}})=\frac{1}{\ell}\omega(\bar{h^{\sigma}})=\frac{1}{\ell}\(\lambda-\beta(\ell-1)+\frac{\ell j}{2}+\frac{1}{2}\).
\]
Therefore, arguing as in the proof of Lemma~\ref{Lemma1}, we can find a form
$H \in S_{k}(1)$ such that $\bar{H}=\bar{h\sl U_{\ell}}=\bar{\eta^{j}(g\sl U_{\ell}})$ and
$f:= \frac{H}{\eta^{j}} \in S_{\lambda+\ell+1+\frac{1}{2}}(1)$. Then, we see that $f \in S_{k-\frac{j}{2}}(1,\nu_{\eta}^{r\ell})$, and we have $\bar{f}=\bar{g\sl U_{\ell}}$.  
The lemma follows since $k-\frac{j}{2} \leq \frac{1}{\ell}(\lambda+\frac{1}{2})$.

\end{proof}

\section{Preliminary Reduction}
Before proving Theorem~\ref{thm:main}, we reduce the number of square classes on which our forms may be supported and the number of multipliers which we must consider.

\begin{proposition}\label{Proposition1}
Suppose that $\ell \geq 5$ is prime, that $K$ is a number field, and that $v$ is a prime above $\ell$. Suppose that $\lambda$ is a non-negative integer, that $r$ is a positive integer with $(r,6)=1$, and that $f \in S_{\lambda+\frac{1}{2}}(1, \nu_{\eta}^{r})$. Further, suppose that

\begin{equation}\label{finitelymanysquareclasses}
 f \equiv \sum_{i=1}^{m}\sum_{n=1}^{\infty}a(t_{i}n^2)q^{\frac{t_{i}n^{2}}{24}}  \not \equiv 0 \pmod v,
 \end{equation}
  where each $t_{i}$ is a square-free positive integer. Then
 \begin{equation}\label{twosquareclasses}
 f \equiv \sum_{n=1}^{\infty}a(n^2)q^{\frac{n^{2}}{24}}+\sum_{n=1}^{\infty}a(\ell n^{2})q^{\frac{\ell n^{2}}{24}} 
 \pmod v.
 \end{equation}
\end{proposition}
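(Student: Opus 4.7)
Plan. The plan is to first use the multiplier constraint to force $(t_i,6)=1$ for every $t_i$ in the support, and then to use quadratic twists together with the Shimura correspondence to eliminate all squarefree $t_i \notin \{1,\ell\}$.

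For the first reduction, \eqref{lemma4.2} implies that the Fourier expansion of any $f \in S_{\lambda+\frac{1}{2}}(1,\nu_\eta^r)$ is supported on indices $n \equiv r \pmod{24}$. Since $(r,6)=1$ is equivalent to $(r,24)=1$, every nonzero coefficient has $(n,6)=1$; writing $n=t_i m^2$ with $t_i$ squarefree then forces $(t_i,6)=1$, so I may assume $(t_i,6)=1$ for every $i$.

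For the main step, suppose for contradiction some squarefree $t_{i_0} \notin \{1,\ell\}$ with $(t_{i_0},6)=1$ contributes nontrivially. Since $t_{i_0}$ is squarefree, coprime to $6$, and different from $1$ and $\ell$, it must have a prime divisor $p \geq 5$ with $p \neq \ell$. By \eqref{chi TRIV} and \eqref{chi P}, the twists $f\otimes\chi_{p}^{\text{triv}}$ and $f\otimes\chi_{p}$ both lie in $S_{\lambda+\frac{1}{2}}(p^2,\nu_\eta^r)$, and they give a decomposition
\[
f = \tfrac{1}{2}(f\otimes\chi_{p}^{\text{triv}}+f\otimes\chi_{p}) + \tfrac{1}{2}(f\otimes\chi_{p}^{\text{triv}}-f\otimes\chi_{p}) + (f-f\otimes\chi_{p}^{\text{triv}})
\]
whose three summands are supported on exponents $n$ with $\chi_p(n)=1$, $\chi_p(n)=-1$, and $p\mid n$ respectively. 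Because $p\mid t_{i_0}$, the entire class-$t_{i_0}$ contribution of $f$ lives in the third summand, while classes with $p\nmid t_i$ only contribute to the first two summands via their $p\nmid n$ parts. Applying the Shimura lift $\Sh_{t_{i_0}}$ via \eqref{ShimuraCorrespondence}--\eqref{ShimuraCorrespondenceCoefficients} to the third summand produces a weight-$2\lambda$ integer-weight cusp form whose Fourier coefficients inherit this $p$-divisibility structure; Lemma~\ref{Lemma2} plays the analogous role when $\ell\mid t_i$, via $U_\ell$. Iterating over all primes $p \geq 5$ with $p\neq\ell$ dividing any $t_i$ then reduces the support of $f$ modulo $v$ to $\{1,\ell\}$.

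The main obstacle is managing the level increases introduced by successive twists (each twist by $\chi_p$ raises the level to a multiple of $p^2$) and organizing the descent so that the contradiction can be extracted at a controlled level; Lemmas~\ref{Lemma1} and~\ref{Lemma2} are the key tools for descending back to level $1$ (with the appropriate multiplier) after each iteration, and the filtration estimates from Proposition~\ref{Gross} applied to the Shimura lift drive the vanishing of the unwanted $t_{i_0}$-contribution modulo $v$.
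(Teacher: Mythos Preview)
Your plan has a genuine gap at the main step. After twisting, you propose to apply the Shimura lift $\Sh_{t_{i_0}}$ to the summand $f - f\otimes\chi_p^{\text{triv}}$ and assert that the resulting weight-$2\lambda$ form ``inherits the $p$-divisibility structure,'' but you never specify what that structure is or how it forces vanishing. In fact, since $p\mid t_{i_0}$, every index $t_{i_0}n^2/d^2$ appearing in \eqref{ShimuraCorrespondenceCoefficients} is already divisible by $p$, so $\Sh_{t_{i_0}}(f - f\otimes\chi_p^{\text{triv}}) = \Sh_{t_{i_0}}(f)$ and nothing new is gained. You then claim that Lemmas~\ref{Lemma1} and~\ref{Lemma2} let you ``descend back to level~$1$'' after each twist, but Lemma~\ref{Lemma1} concerns $\Theta$ and Lemma~\ref{Lemma2} concerns $U_\ell$; neither touches the $\Gamma_0(p^2)$-level introduced by a twist at $p\neq\ell$. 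The closing assertion that ``filtration estimates \ldots\ drive the vanishing'' is not a proof: no filtration computation is carried out, and no contradiction is ever reached.

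The paper's argument shares your opening move (iterated quadratic twists, following Ahlgren--Boylan, to isolate a single square class $t_i$ in a form $G_i\in S_{\lambda+\frac12}(p_1^2\cdots p_n^2,\nu_\eta^r)$ with the $p_j$ chosen coprime to $t_i\ell$) but has a completely different endgame. Instead of Shimura lifting, it takes the $24$th power: $G_i^{24}\in S_{24\lambda+12}(p_1^2\cdots p_n^2)$ now has \emph{trivial} multiplier and is supported modulo $v$ on multiples of $t_i$. Then \cite[Thm~3.1]{Ahlgren-Choi-Rouse} (quoted in the paper as Theorem~\ref{thm:ACR THM 3.1}) says that an integer-weight cusp form of level $N$ supported on multiples of some $t>1$ with $(t,\ell N)=1$ must vanish modulo $v$; since $(t_i,p_1\cdots p_n)=1$ by construction, this forces $t_i\in\{1,\ell\}$. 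The passage to integer weight by raising to the $24$th power, and the invocation of this black-box vanishing result, are the missing ingredients in your plan.
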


\begin{proof}
Fix an $i \in \{1,...,m \}$. We may assume that there exists an integer $n_{i}$ for which $a(t_{i}n_{i}^{2}) \not \equiv 0 \pmod{v}$. Recalling our notation \eqref{twist} and the facts \eqref{chi TRIV} and \eqref{chi P}, we follow the argument in the proof of Lemma $4.1$ of \cite{Ahlgren-Boylan} to find primes $p_{1},...,p_{n} \geq 5$, each relatively prime to $n_{i}t_{i}\ell$ and a form
\[
G_{i} \in S_{\lambda+\frac{1}{2}}(p_{1}^{2}\cdots p_{n}^{2},\nu_{\eta}^{r})
\]
satisfying
\[
G_{i} \equiv \sum_{(n,\prod p_{j})=1}a(t_{i}n^{2})q^{\frac{t_{i}n^{2}}{24}} \not \equiv 0 \pmod{v}.
\]
Note that 

\[
G_{i}^{24} \in S_{24\lambda+12}(p_{1}^{2},...,p_{s}^{2}).
\]
Since 
\[
G_{i}^{24} \equiv \sum_{n=1}^{\infty}b(t_{i}n)q^{t_{i}n} \pmod{v}
\]
for some coefficients $b(t_{i}n)$, we can apply 
the following result
to conclude that $t_{i}=1 \text{ or } \ell$.

\begin{theorem}{\cite[Thm 3.1]{Ahlgren-Choi-Rouse}}\label{thm:ACR THM 3.1}
Suppose that $K$ is a number field and that $v$ is a prime above $\ell$ with ring of $v$-integral elements $\mathcal{O}_{v}$. Suppose that $k$ is positive integer and that 
\[
f=\sum_{n=1}^{\infty}a(n)q^{n} \in S_{2k}(\Gamma_0(N)). 
\]
If $t > 1$ satisfies $(t,\ell N)=1$ and
\[
f \equiv \displaystyle \sum_{n=1}^{\infty}a(tn)q^{tn} \pmod{v},
\]
then $f \equiv 0 \pmod v$.
\end{theorem}

\end{proof}
The next result reduces the number of multipliers which we must consider.

\begin{lemma}\label{lemmatwomultipliers}

Suppose that $r$ is a positive integer with $(r,6)=1$ and that $f \in S_{\lambda+\frac{1}{2}}(1, \nu_{\eta}^{r})$ satisfies \eqref{twosquareclasses}. Then we have
\begin{equation}\label{twomultipliers}
r \equiv 1 \pmod{24} \ \ \ \ \text{ or } \ \ \ \ r \equiv \ell \pmod{24}.
\end{equation}
\end{lemma}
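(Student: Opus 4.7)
The plan is to exploit the constraint on Fourier exponents from \eqref{lemma4.2}, which forces every nonzero coefficient of $f$ to sit on an exponent $n \equiv r \pmod{24}$. Since $f \not\equiv 0 \pmod v$, at least one term on the right-hand side of \eqref{twosquareclasses} contributes, so I will show separately that each type of surviving term pins down $r$ to one of the two residue classes $1$ or $\ell$ modulo $24$.

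First I would tabulate the squares modulo $24$: a short check gives $n^2 \pmod{24} \in \{0,1,4,9,12,16\}$. Among these residues, the only one coprime to $6$ is $1$, and this residue is achieved precisely when $(n,6)=1$. This is the only number-theoretic input needed.

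Now suppose some $a(n_0^2) \not\equiv 0 \pmod v$ in the first sum of \eqref{twosquareclasses}. Then \eqref{lemma4.2} forces $n_0^2 \equiv r \pmod{24}$. Since $(r,6)=1$, the table above gives $r \equiv 1 \pmod{24}$. Alternatively, suppose some $a(\ell n_0^2) \not\equiv 0 \pmod v$ in the second sum. Then $\ell n_0^2 \equiv r \pmod{24}$. Since $\ell \geq 5$ is prime, $\gcd(\ell,24)=1$, so multiplication by $\ell$ is a bijection on $(\Z/24\Z)^\times$; hence $r$ coprime to $6$ forces $n_0^2$ coprime to $6$ modulo $24$, which by the table means $n_0^2 \equiv 1 \pmod{24}$, yielding $r \equiv \ell \pmod{24}$. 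Since $f \not\equiv 0 \pmod v$, at least one of these two cases occurs, completing the proof.

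There is really no main obstacle: the entire content is recognizing that \eqref{lemma4.2} pins the exponents to a single residue class modulo $24$, and then reading off the squares and $\ell$-multiples of squares in $(\Z/24\Z)^\times$. The only mild subtlety is noting the two cases can coexist (forcing $r \equiv 1 \equiv \ell \pmod{24}$), which is exactly the compatibility needed for case (3) of Theorem~\ref{thm:main}.
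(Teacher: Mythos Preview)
Your proof is correct and follows essentially the same route as the paper: both use \eqref{lemma4.2} to constrain the exponents modulo $24$ and then the fact that the only square in $(\Z/24\Z)^\times$ is $1$ (the paper phrases this as ``$r^{2}\equiv 1\pmod{24}$ whenever $(r,6)=1$''). Your write-up is simply more explicit about the two cases.
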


\begin{proof}
Since $f$ satisfies \eqref{twosquareclasses}, it follows that either $a(n^{2}) \neq 0$ or $a(\ell n^{2}) \neq 0$ for some positive integer $n$. It follows from \eqref{lemma4.2} and the fact that $r^{2} \equiv 1 \pmod{24}$ whenever $(r,6)=1$ that we have \eqref{twomultipliers}. 
\end{proof}

\section{Proof of Theorem~\ref{thm:main}}
The proof of Theorem~\ref{thm:main} will proceed in several steps. We first consider the case when $r \equiv 1 \pmod{24}$ and $\lambda$ is even.

\begin{theorem}\label{Theorem1}
Suppose that $\ell \geq 5$ is prime, that $K$ is a number field, and that $v$ is a prime above $\ell$. Suppose that $\lambda$ is a non-negative integer and that $f \in S_{\lambda+\frac{1}{2}}(1, \nu_{\eta})$ has the form \eqref{twosquareclasses}. If $\lambda$ is even and $\lambda < 2\ell^{2}+\ell-1$, then

\[
\sum_{\ell \nmid n}a(n^{2})q^{\frac{n^{2}}{24}} \equiv a(1)\sum_{\ell \nmid n}\(\frac{12}{n}\)n^{\lambda}q^{\frac{n^{2}}{24}} \pmod{v}.
\]

\end{theorem}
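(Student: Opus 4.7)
The plan is to construct a modular-form companion to the candidate expression $\sum_n \pfrac{12}{n} n^\lambda q^{n^2/24}$, form a difference with $f$, twist by $\chi_\ell$ to isolate the indices $n^2$ with $\ell\nmid n$, and then force the resulting form to vanish modulo $v$ via a filtration argument after passing through the Shimura lift.

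First I would build the companion. Iterating Lemma~\ref{Lemma1} starting from $\eta \in S_{1/2}(1,\nu_\eta)$ produces a cusp form $F \in S_{\lambda(\ell+1)/2 + 1/2}(1,\nu_\eta)$ with $F \equiv \Theta^{\lambda/2}(\eta)\pmod{v}$, so that $24^{\lambda/2}F \equiv \sum_{n\geq 1}\pfrac{12}{n} n^\lambda q^{n^2/24}\pmod{v}$. Using $E_{\ell-1}\equiv 1\pmod{\ell}$ to match weights, I form
\[
G := E_{\ell-1}^{\lambda/2} f - a(1)\cdot 24^{\lambda/2} F \ \in\ S_{\lambda(\ell+1)/2 + 1/2}(1,\nu_\eta),
\]
which modulo $v$ satisfies
\[
G \equiv \sum_{n\geq 1}\bigl(a(n^2) - a(1)\pfrac{12}{n} n^\lambda\bigr) q^{n^2/24} + \sum_{n\geq 1} a(\ell n^2)q^{\ell n^2/24}.
\]

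Next I would twist by $\chi_\ell$ to cut out the coefficients I want to control: by \eqref{chi P}, $G\otimes\chi_\ell \in M_{\lambda(\ell+1)/2 + 1/2}(\ell^2,\nu_\eta)$, and since $\chi_\ell(n^2)=1$ for $\ell\nmid n$ but vanishes for $\ell\mid n$, and $\chi_\ell(\ell n^2)=0$ always, the second sum is killed and the first is restricted to $\ell\nmid n$:
\[
G\otimes\chi_\ell \equiv \sum_{\ell\nmid n}\bigl(a(n^2) - a(1)\pfrac{12}{n} n^\lambda\bigr) q^{n^2/24}\pmod{v}.
\]
Thus the theorem is equivalent to $G\otimes\chi_\ell \equiv 0\pmod{v}$. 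To prove this I would pass to integer weight via the Shimura lift, after translating to the Shimura setting by \eqref{passing to Shimura's space}: $\Sh_1(G\otimes\chi_\ell)$ is an integer-weight cusp form of weight $\lambda(\ell+1)$ on a subgroup of level dividing $288\ell^2$. The Dirichlet-convolution formula \eqref{ShimuraCorrespondenceCoefficients} has multiplicative kernel $d\mapsto \pfrac{12}{d}d^{\lambda(\ell+1)/2-1}$ with value $1$ at $d=1$, so on indices $n$ coprime to $6\ell$ one can invert and conclude that $G\otimes\chi_\ell\equiv 0\pmod{v}$ if and only if $\Sh_1(G\otimes\chi_\ell)\equiv 0\pmod{v}$ on those indices.

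The main obstacle is the last step. One needs to transfer $\Sh_1(G\otimes\chi_\ell)$ to a level-$1$ integer-weight companion (by multiplying by an appropriate $\eta$-product, or by controlling the local behavior of $\chi_\ell$-twists at $\ell$) so that Proposition~\ref{Gross} can be invoked, and then to use the hypothesis $\lambda < 2\ell^2+\ell-1$ to bound the filtration of this companion strictly below the threshold that would allow a nonzero form with Fourier support constrained to the image of the Shimura correspondence of a sparse form. In particular, one must show how the factor $(\ell+1)$ introduced by the iterated $\Theta$-operation, together with the passage through the twist, yields a filtration estimate whose feasibility is governed precisely by the bound $\lambda < 2\ell^2+\ell-1$.
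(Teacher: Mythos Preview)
Your construction of the companion $F$ and the difference $G$ is reasonable in spirit, but the argument has two genuine gaps, and the paper's route is substantially different.

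\textbf{First gap: the weight of the companion.} You iterate $\Theta$ exactly $\lambda/2$ times, so your companion lives in weight $\tfrac{\lambda}{2}(\ell+1)+\tfrac12$. Since the hypothesis only gives $\lambda<2\ell^{2}+\ell-1$, this weight can be of size $\sim\ell^{3}$. Any Sturm-type or filtration bound you would invoke at the end is then hopeless: the Sturm bound on level $1$ (or even $288\ell^{2}$) for weight $\sim\ell^{3}$ is far larger than the first index at which you have any information. The paper avoids this by reducing the exponent modulo $\ell-1$: it sets $\bar\lambda:=\lambda\pmod{\ell-1}$ and builds the companion as $h\equiv 24^{\bar\lambda/2}a(1)\,\Theta^{(\bar\lambda+2)/2}(\eta)$, which lives in weight $\tfrac{(\ell+1)(\bar\lambda+2)}{2}+\tfrac12<\tfrac{(\ell+1)\ell}{2}$. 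This keeps \emph{both} forms in weight $O(\ell^{2})$, which is exactly what the bound $\lambda<2\ell^{2}+\ell-1$ is calibrated to.

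\textbf{Second gap: no mechanism for vanishing.} After forming $G\otimes\chi_\ell$ you have a nonzero (a priori) half-integral weight form whose support lies in $\{n^{2}:\ell\nmid n\}$, and you propose to show it vanishes via the Shimura lift and a filtration bound. But the Shimura lift of $G\otimes\chi_\ell$ carries no special sparsity; it is just a cusp form of large weight on level $288\ell^{2}$, and there is no filtration principle that forces such a form to be $\equiv 0\pmod v$. You have not yet produced a single congruence between coefficients of $f$ and your predicted expression, so there is nothing for a Sturm/filtration bound to bite on.

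\textbf{What the paper does instead.} The paper first applies $\Theta$ once to $f$ (this plays the role of your $\chi_\ell$-twist without raising the level) to get $g\equiv\Theta(f)$. The crucial new input is the Bruinier--Ono eigenform criterion (Theorem~\ref{Bruinier-Ono}): since $g$ modulo $v$ is supported on a single square class, for every prime $p\geq 5$ with $p\not\equiv 0,1\pmod\ell$ one gets $g\mid T(p^{2})\equiv\pfrac{12}{p}(p^{\bar\lambda+2}+p^{\bar\lambda+1})g\pmod v$. An induction on the prime factorization then shows $c(n^{2})\equiv\pfrac{12}{n}\tfrac{n^{\bar\lambda+2}}{24}a(1)$ for every $n$ with $(n,6)=1$ divisible only by primes $\not\equiv 0,1\pmod\ell$. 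Since the smallest prime $\equiv 1\pmod\ell$ is at least $2\ell+1$, the coefficients of $g$ and $h$ agree for all indices below $(2\ell+1)^{2}$. After equalizing weights with a power of $E_{\ell-1}$ and dividing by $\eta$ to land in integral weight on $\SL_{2}(\Z)$, Sturm's bound $k/12<(\ell^{2}+\ell)/6$ (which is exactly where $\lambda<2\ell^{2}+\ell-1$ enters) finishes the proof. Your outline omits this Hecke-eigenvalue step entirely; without it there is no starting point for the comparison.
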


\begin{proof}
Define $\bar{\lambda}:= \lambda \pmod{\ell-1}$. By Lemma~\ref{Lemma1}, we have forms $g(z) \in S_{\lambda+\ell+1+\frac{1}{2}}(1,\nu_{\eta})$ and 
$h(z) \in S_{(\ell+1)\frac{\bar{\lambda}+2}{2}+\frac{1}{2}}(1,\nu_{\eta})$ such that

\[
g=\sum_{n=1}^{\infty}c(n)q^{\frac{n}{24}} \equiv \Theta(f) \equiv \sum_{n=1}^{\infty}\frac{n^2}{24}a(n^2)q^{\frac{n^2}{24}} \pmod{v}
\]
and
\[
h=\sum_{n=1}^{\infty}b(n)q^{\frac{n}{24}} \equiv 24^{\frac{\bar{\lambda}}{2}}a(1)\Theta^{\frac{\bar{\lambda}+2}{2}}(\eta) \equiv a(1)\sum_{n=1}^{\infty}\(\frac{12}{n}\)\frac{n^{\bar{\lambda}+2}}{24}q^{\frac{n^2}{24}} \pmod{v}.
\]

It suffices to show that $g \equiv h \pmod{v}$.  To this end, we make use of this theorem, which follows from an argument of Bruinier and Ono \cite[Thm 3.1]{Bruinier-Ono} (see \cite[Thm 2.1]{Ahlgren-Choi-Rouse}).

\begin{theorem}\label{Bruinier-Ono}
Suppose that $N$ is a positive integer with $4 \mid N$. Suppose that $\ell \geq 5$ is prime, that $K$ is a number field, and that $v$ is a prime of $K$ above $\ell$. 
Suppose that $\lambda$ is a non-negative integer and that $r$ is a positive integer with $(r,6)=1$. Suppose that

\[
f(z)=\sum_{n=1}^{\infty}a(n)q^{n} \in S_{\lambda+\frac{1}{2}}(N,\chi\(\tfrac{12}{\bullet}\) \nu_{\theta}^{r}),
\]
that $\ell \nmid N$, and that $p \nmid N\ell$ is prime. If there exists $\epsilon_{p} \in \{ \pm 1\}$ such that

\[
f(z) \equiv \sum_{\( \frac{n}{p}\) \in \{0, \epsilon_{p}\}}a(n)q^{n} \pmod v,
\]
then we have
\[
(p-1)f(z)\sl T\(p^2,\lambda +\mfrac{1}{2},\chi\) \equiv \epsilon_{p}\chi(p)\(\tfrac{(-1)^{\lambda}}{p}\)(p^{\lambda}+p^{\lambda-1})(p-1)f(z) \pmod v.
\]
\end{theorem}

By \eqref{passing to Shimura's space}, we can apply Theorem~\ref{Bruinier-Ono} to $g(24z)$ to conclude that, for odd primes $p \geq 5$ with $p \not \equiv 0,1 \pmod{\ell}$, we have

\begin{equation}\label{applicationofBruinierOno}
g(24z) \sl T\(p^2,\lambda+\ell+1+\mfrac{1}{2},1\) \equiv \(\frac{12}{p}\)(p^{\bar{\lambda}+2}+p^{\bar{\lambda}+1})g(24z) \pmod{v}.
\end{equation}
Suppose that $n$ is a positive integer satisfying $(n,6)=1$ which is divisible only by primes $p \not \equiv 0,1 \pmod{\ell}$.
 If $p$ is such a prime, write $n=p^{a}n_{0}$ if $p^{a} \mid \mid n$.
The definition of the Hecke operator on $S_{\lambda+\ell+1+\frac{1}{2}}(576,(\frac{12}{\bullet})\nu_{\theta}^{r})$ implies that we have
\[
c(n^2p^2)+p^{\bar{\lambda}+1}\(\frac{12n^2}{p}\)c(n^2)+p^{2\bar{\lambda}+3}c\(\frac{n^2}{p^2}\) \equiv \(\frac{12}{p}\)(p^{\bar{\lambda}+2}+p^{\bar{\lambda}+1})c(n^2) \pmod{v},
\]
and an induction argument on $a$ then implies that
\[
c(p^{2a}n_{0}^{2}) \equiv \(\frac{12}{p}\)^{a}p^{a(\bar{\lambda}+2)}c(n_{0}^{2}) \pmod{v}.
\]

Thus, we have
\[
c(n^{2}) \equiv \(\frac{12}{n}\)n^{\bar{\lambda}+2}c(1) \equiv \(\frac{12}{n}\)\frac{n^{\bar{\lambda}+2}}{24}a(1) \equiv b(n^2) \pmod{v}.
\]
This shows that the coefficients $c(n^{2})$ and $b(n^{2})$ agree whenever $n$ is a positive integer such that $(n,6)=1$ which is divisible only by primes $p \geq 5$ with $p \not \equiv 0,1 \pmod{\ell}$.

Now define
\[
k:= \max\{\lambda+\ell+1,(\ell+1)\frac{\bar{\lambda}+2}{2}\}. 
\]
These numbers agree modulo $\ell-1$ by virtue of $\lambda$ being even, so by multiplying $g$ or $h$ by an appropriate power of $E_{\ell-1} \equiv 1 \pmod{\ell}$, we see that there exist forms $g_{1}$ and $h_{1}$ in $S_{k+\frac{1}{2}}(1,\nu_{\eta})$ such that $g_{1} \equiv g \pmod{v}$ and $h_{1} \equiv h \pmod{v}$. 
Thus, to prove the theorem, it suffices to show that $g_{1} \equiv h_{1} \pmod{v}$.
Note that $c(n^2) \equiv b(n^2) \equiv 0 \pmod{v}$ for positive integers $n$ such that $(n,6) \neq 1$ by \eqref{lemma4.2}, and that $c(n^2)$ and $b(n^2)$ vanish modulo $\ell$ whenever $n$ is divisible by $\ell$. This implies that $c(n) \equiv b(n) \pmod{v}$ whenever $n < (2\ell+1)^{2}$.
 Thus,
$\eta^{-1}(g_{1}-h_{1}) \in M_{k}(1)$ is of the form

\[
\eta^{-1}(g_{1}-h_{1}) \equiv cq^{\frac{\ell^{2}+\ell}{6}}+ \cdots \pmod{v}
\]
for some $c \in \mathcal{O}_{v}$. By arguing as in the proof of Lemma~\ref{Lemma1}, we may assume that $\eta^{-1}(g_{1}-h_{1}) \in S_{k}(1)$. To prove that $g_{1} \equiv h_{1} \pmod{v}$, it suffices to show by \cite[Thm 1]{Sturm} that 

\[
\frac{k}{12} < \frac{\ell^{2}+\ell}{6}.
\]
Since $\bar{\lambda} < \ell$, we have

\[
\frac{(\ell+1)(\bar{\lambda}+2)}{24}<\frac{\ell^{2}+\ell}{6}.
\]
Since $\lambda < 2\ell^{2}+\ell-1$, we have
\[
\frac{\lambda+\ell+1}{12}<\frac{\ell^{2}+\ell}{6}.
\]
The result follows.
\end{proof}

We now consider what happens when $\lambda$ is odd.

\begin{proposition}\label{oddcases}
Suppose that $\ell \geq 5$ is prime, that $K$ is a number field, and that $v$ is a prime of $K$ above $\ell$. Suppose that $\lambda$ is a non-negative integer, that $r$ is a positive integer with $(r,6)=1$, and that $f \in S_{\lambda+\frac{1}{2}}(1, \nu_{\eta}^{r})$ has the form
\[
f \equiv \sum_{n=1}^{\infty}a(n^{2})q^{\frac{n^{2}}{24}}+\sum_{n=1}^{\infty}a(\ell n^{2})q^{\frac{\ell n^{2}}{24}} \not \equiv 0 \pmod{v}.
\]
If $\lambda$ is odd, then $\Theta(f) \equiv 0 \pmod{v}$.
\end{proposition}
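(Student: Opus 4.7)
The plan is to show that when $\lambda$ is odd, the combination of the multiplier constraint and the Fourier support condition forces every coefficient $a(m^{2})$ to vanish identically, leaving $f$ supported modulo $v$ only on indices of the form $\ell n^{2}$, where $\Theta$ annihilates it because of the factor of $\ell$ in each coefficient.

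First I would extract parity information on $r$. Since $f \not\equiv 0 \pmod v$, Lemma~\ref{lemmatwomultipliers} gives $r \equiv 1 \pmod{24}$ or $r \equiv \ell \pmod{24}$. Applying the nonvanishing constraint $2k - r \equiv 1 - \chi(-1) \pmod 4$ (recalled just after Lemma~\ref{integerhalfinteger}) with $k = \lambda + \tfrac{1}{2}$ and trivial character yields $r \equiv 2\lambda + 1 \pmod 4$. For $\lambda$ odd this forces $r \equiv 3 \pmod 4$, excluding $r \equiv 1 \pmod{24}$; so $r \equiv \ell \pmod{24}$ and $\ell \equiv 3 \pmod 4$.

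Next I would use the combinatorics of squares modulo $24$. For $\ell \geq 5$ prime with $\ell \equiv 3 \pmod 4$, the residue $\ell \bmod 24$ lies in $\{7,11,19,23\}$, none of which is a square modulo $24$ (the squares modulo $24$ are $\{0,1,4,9,12,16\}$). By \eqref{lemma4.2}, the Fourier coefficient $a(m^{2})$ must then vanish for every $m \geq 1$, and the hypothesis reduces to
\[
f \equiv \sum_{n=1}^{\infty} a(\ell n^{2})\, q^{\ell n^{2}/24} \pmod v.
\]
Finally I would apply $\Theta = q\,\tfrac{d}{dq}$: the coefficient of $q^{\ell n^{2}/24}$ becomes $\tfrac{\ell n^{2}}{24}\, a(\ell n^{2})$, and since $\ell \geq 5$ the integer $24$ is a unit in $\mathcal{O}_{v}$, so $\tfrac{\ell n^{2}}{24} \in \ell\mathcal{O}_{v} \subseteq v\mathcal{O}_{v}$. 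Every coefficient of $\Theta f$ therefore lies in $v\mathcal{O}_{v}$, whence $\Theta f \equiv 0 \pmod v$.

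The step that warrants the most care is combining the nonvanishing constraint with the combinatorics of squares modulo $24$ to conclude that the coefficients $a(m^{2})$ vanish identically when $\lambda$ is odd; once that is in place, the rest is mechanical. In particular, unlike the even case treated in Theorem~\ref{Theorem1}, no input from the Shimura correspondence, the Bruinier--Ono theorem, or Sturm's bound is required.
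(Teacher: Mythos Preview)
Your argument is correct and rests on the same parity obstruction as the paper's: for odd $\lambda$ the constraint $2k-r\equiv 0\pmod 4$ forces $r\equiv 3\pmod 4$, which (together with the Fourier support condition \eqref{lemma4.2}) rules out any nonzero coefficient $a(m^2)$. The paper packages this as a two-line contradiction---if $\Theta(f)\not\equiv 0\pmod v$ then some $a(n_0^2)\neq 0$, so $n_0^2\equiv r\pmod{24}$ gives $r\equiv 1\pmod{24}$, and then Lemma~\ref{integerhalfinteger} yields $\eta^{-1}f\in M_\lambda(1)=\{0\}$---without invoking Lemma~\ref{lemmatwomultipliers} or enumerating the residues of $\ell$ modulo $24$; but the content is the same.
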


\begin{proof}
Suppose by way of contradiction that $\Theta(f) \not \equiv 0 \pmod{v}$. By Lemma~\ref{Lemma1}, there exists $g \in S_{\lambda+\ell+1+\frac{1}{2}}(1,\nu_{\eta}^{r})$ such that

\[
g \equiv \sum_{n=1}^{\infty} \frac{n^{2}}{24}a(n^{2})q^{\frac{n^{2}}{24}} \not \equiv 0 \pmod{v},
\]
so there exists $n_{0}$ such that $a(n_{0}^{2}) \neq 0$.
By \eqref{lemma4.2}, we have $r \equiv 1 \pmod{24}$. By Lemma~\ref{integerhalfinteger}, we then have $\eta^{-1}f \in M_{\lambda}(1)=\{0\},$ which is a contradiction. 
Thus, $\Theta(f) \equiv 0 \pmod{v}$.
\end{proof}

We require one more result before proving Theorem~\ref{thm:main}.

\begin{proposition}\label{Spicy}
Suppose that $\ell \geq 5$ is prime, that $K$ is a number field which is Galois over $\Q$, and that $v$ is a prime of $K$ above $\ell$. 
Suppose that $r$ is a positive integer with $(r,6)=1$ and that $g \in S_{\lambda'+\frac{1}{2}}(1, \nu_{\eta}^{r})$ satisfies 

\[
 g \equiv \sum_{n=1}^{\infty}a(n^2)q^{\frac{n^{2}}{24}}+\sum_{n=1}^{\infty}a(\ell n^{2})q^{\frac{\ell n^{2}}{24}} \not \equiv 0 \pmod v. 
 \]
If $\lambda' < \frac{\ell-1}{2}$, then $\lambda'=0$, $r=1$, and $g=c\eta$ for 
some $c \in \mathcal{O}_{v}$. 
\end{proposition}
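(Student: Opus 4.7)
The plan is to argue by strong induction on $\lambda'$. The base case $\lambda' = 0$ follows from Lemma~\ref{integerhalfinteger}: for $1 \leq r < 24$ with $(r,6)=1$ one has $\eta^{-r} g \in M_{(1-r)/2}(1)$, which is zero unless $r = 1$ (giving $M_0(1) = \C$), so $r \equiv 1 \pmod{24}$ and $g = c\eta$ for some $c$.

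For the inductive step with $\lambda' > 0$: by Lemma~\ref{lemmatwomultipliers} we have $r \equiv 1$ or $r \equiv \ell \pmod{24}$. If $r \equiv \ell \not\equiv 1 \pmod{24}$, the Fourier constraint $n \equiv r \pmod{24}$ kills the ``square'' part modulo $v$, leaving $g$ supported on $\ell$-divisible Fourier indices; Lemma~\ref{Lemma2} then produces a form of weight $\lambda''+\tfrac{1}{2} \leq (\lambda'+\tfrac{1}{2})/\ell < \tfrac{1}{2}$, forcing $\lambda'' < 0$, a contradiction. If $r \equiv 1 \pmod{24}$ and $\lambda'$ is odd, Proposition~\ref{oddcases} gives $\Theta(g) \equiv 0 \pmod{v}$, hence $a(n^2) \equiv 0$ for $\ell \nmid n$, so $g$ is again supported on $\ell$-divisible indices and Lemma~\ref{Lemma2} yields the same contradiction.

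The main case is $r \equiv 1 \pmod{24}$ with $\lambda'$ a positive even integer. Apply Theorem~\ref{Theorem1} and set $h_0 := \sum_n \tleg{12}{n} n^{\lambda'} q^{n^2/24}$; Lemma~\ref{Lemma1} supplies $h \in S_{\lambda'(\ell+1)/2 + 1/2}(1,\nu_\eta)$ with $h \equiv 24^{-\lambda'/2} h_0 \pmod{\ell}$. The form
\[
G := g \cdot E_{\ell-1}^{\lambda'/2} - a(1) \cdot 24^{\lambda'/2} h \in S_{\lambda'(\ell+1)/2 + 1/2}(1,\nu_\eta)
\]
is supported on $\ell$-divisible exponents modulo $v$, so Lemma~\ref{Lemma2} produces $f \in S_{\lambda'''+1/2}(1,\nu_\eta^\ell)$ with $\lambda''' + \tfrac{1}{2} \leq (\lambda'(\ell+1)/2 + \tfrac{1}{2})/\ell$. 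The bound $\lambda' < (\ell-1)/2$ ensures $\lambda''' < \lambda'$, and the Fourier expansion of $G|U_\ell$ retains the two-square-class shape, so the inductive hypothesis applies to $f$. This forces either $f \equiv 0$ (whence $G \equiv 0$) or $\lambda''' = 0$ with $\ell \equiv 1 \pmod{24}$ and $f = c\eta$. In both subcases one concludes
\[
\bar g \equiv a(1) h_0 + c\,\bar\eta(\ell z) \pmod{\ell}
\]
(with $c = 0$ in the first subcase).

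To finish, Lemma~\ref{integerhalfinteger} gives $F := \eta^{-1} g \in M_{\lambda'}(1)$, so $\omega(\bar F) \leq \lambda'$. Using $\eta(\ell z) \equiv \eta^\ell \pmod{\ell}$, one computes $\bar F \equiv a(1) \cdot 24^{\lambda'/2} P_{\lambda'/2}(E_2,E_4,E_6) + c\,\bar\eta^{\ell-1} \pmod{\ell}$, where $P_j := \Theta^j(\eta)/\eta$ satisfies $P_{j+1} = \Theta(P_j) + (E_2/24) P_j$. Using Ramanujan's identities for $\Theta(E_2), \Theta(E_4), \Theta(E_6)$, induction shows $P_j$ is a polynomial in $E_2, E_4, E_6$ whose $E_2^j$ coefficient equals $(2j-1)!!/24^j$, a unit modulo $\ell$ for $j < (\ell+1)/2$. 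Since $\omega(E_2) = \ell+1$ mod $\ell$ and every other monomial $E_2^a E_4^b E_6^c$ in $P_j$ (with $a + 2b + 3c = j$, $a < j$) has filtration $a(\ell+1) + 4b + 6c = j(\ell+1) - (2b+3c)(\ell-1) \leq j(\ell+1) - (\ell-1)$, one obtains $\omega(P_{\lambda'/2}) = \lambda'(\ell+1)/2$. When $\ell \equiv 1 \pmod{24}$, $\eta^{\ell-1} = \Delta^{(\ell-1)/24}$ has filtration $(\ell-1)/2$ by Proposition~\ref{Gross}(3). The two filtrations lie in distinct residue classes $\lambda'$ and $(\ell-1)/2$ modulo $\ell-1$ (using $\lambda' < (\ell-1)/2$), and both exceed $\lambda'$; Proposition~\ref{Gross}(4) applied to $\bar F \in \bar M_{\lambda'}(1)$ then forces both summands to vanish modulo $\ell$, i.e., $a(1) \equiv c \equiv 0 \pmod{v}$. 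Hence $\bar g \equiv 0$, contradicting $g \not\equiv 0 \pmod{v}$.

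The main obstacle will be establishing the filtration equality $\omega(P_{\lambda'/2}) = \lambda'(\ell+1)/2$: one must verify inductively that the leading coefficient $(2j-1)!!/24^j$ of $E_2^j$ in $P_j$ is a unit modulo $\ell$ (which is exactly where the bound $\lambda' < (\ell-1)/2$ becomes critical, ensuring $2j - 1 < \ell$), and that the strictly smaller filtrations of the remaining monomials prevent any cancellation of the dominant $E_2^j$ term.
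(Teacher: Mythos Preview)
Your argument is correct and takes a genuinely different route from the paper's. Both proofs share the easy cases ($\lambda'=0$, $\lambda'$ odd, $r\equiv\ell\not\equiv 1$) and both ultimately reduce the even case to showing $a(1)\equiv 0\pmod v$, but the mechanisms diverge. The paper passes to integer weight via the Shimura lift $\Sh_1(g)\in S_{2\lambda'}(288)$, identifies its prime-to-$\ell$ coefficients with those of $-\tfrac{a(1)}{24}\Theta^{\lambda'-1}(E_2\otimes\chi_{12})$, and then runs a Frobenius/filtration argument on level $288$: if $a(1)\not\equiv 0$ the difference would have filtration $\ell(\lambda'-1)+\lambda'+1$, which is not a multiple of $\ell$, contradicting the fact that it is supported on $\ell$-divisible exponents. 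You instead stay on level $1$, use strong induction on $\lambda'$ together with Lemma~\ref{Lemma2} to strip off a possible $c\,\eta^{\ell}$ piece, and then analyse $F=\eta^{-1}g\in M_{\lambda'}(1)$ directly via the quasi-modular expansion $P_j=\Theta^j(\eta)/\eta\in\Q[E_2,E_4,E_6]$. Your approach is more elementary in that it avoids the Shimura correspondence entirely; the paper's approach is more direct (no induction) and ties the result into the integer-weight theory.

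Two small remarks on your write-up. First, the step ``Proposition~\ref{Gross}(4) \ldots\ forces both summands to vanish'' deserves one more line: since $\bar F$ and $a(1)24^{\lambda'/2}\bar P_{\lambda'/2}$ both lie in $\bar M_{\lambda'(\ell+1)/2}(1)$ (the latter because each monomial $E_{\ell+1}^aE_4^bE_6^c$ can be padded by $E_{\ell-1}^{j-a}$), their difference $c\,\bar\eta^{\ell-1}$ does too, and Proposition~\ref{Gross}(4) then gives $\lambda'\equiv(\ell-1)/2\pmod{\ell-1}$, which is false; hence $c\equiv 0$, and then $\omega(\bar F)=\lambda'(\ell+1)/2>\lambda'$ forces $a(1)\equiv 0$. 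Second, your claim that the bound $\lambda'<(\ell-1)/2$ is ``exactly'' what makes $(2j-1)!!$ a unit is not quite right: that only needs $\lambda'<\ell+1$. The bound is genuinely critical in the residue-class comparison just described (to separate $\lambda'$ from $(\ell-1)/2$ modulo $\ell-1$) and in the odd and $r\equiv\ell$ cases (to force $\lambda''<0$ after applying Lemma~\ref{Lemma2}).
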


\begin{proof}
First assume that $\lambda'=0$. Assume without loss of generality that $0<r <24$. By Lemma~\ref{integerhalfinteger}, we have $\eta^{-r}g \in M_{\frac{1}{2}-\frac{r}{2}}(1)$, from which $r=1$ follows. Thus, $\eta^{-1}g \in M_0(1) = \mathcal{O}_{v}$, and the result follows.

Now assume that $1 \leq \lambda' < \frac{\ell-1}{2}$.
Suppose first that $\lambda'$ is odd. Proposition~\ref{oddcases} implies that $\Theta (g) \equiv 0 \pmod{v}$, so we have 
\[
g \equiv \sum_{n=1}^{\infty}a(\ell n^{2})q^{\frac{\ell n^{2}}{24}}+\sum_{n=1}^{\infty}a(\ell^{2}n^{2})q^{\frac{\ell^{2}n^{2}}{24}} \pmod{v}.
\]
By Lemma~\ref{Lemma2}, there exists $f \in S_{\lambda^{*}+\frac{1}{2}}(1,\nu_{\eta}^{r\ell})$ with 
\[
f \equiv \sum_{n=1}^{\infty}a(\ell n^{2})q^{\frac{n^{2}}{24}}+\sum_{n=1}^{\infty}a(\ell^{2}n^{2})q^{\frac{\ell n^{2}}{24}} \pmod{v},
\]
and $\lambda^{*}+\frac{1}{2} \leq \frac{1}{\ell}(\lambda^{'}+\frac{1}{2})$. Since $\lambda^{*} \geq 0$, we have $\frac{1}{2} \leq \frac{1}{\ell}(\lambda^{'}+\frac{1}{2})$, which would imply that $\lambda' \geq \frac{\ell-1}{2}$, so $\lambda'$ cannot be odd.

Now assume that $2 \leq \lambda' < \frac{\ell-1}{2}$  and $\lambda'$ is even. Applying Theorem~\ref{Theorem1}, we see that 

\[
\Theta^{\ell-1}(g) \equiv a(1) \sum_{\ell \nmid n}\(\frac{12}{n}\)n^{\lambda'}q^{\frac{n^{2}}{24}} \pmod{v}, 
\]
which implies that
\begin{equation}\label{somethingkewl}
a(n^{2}) \equiv a(1) \(\frac{12}{n}\)n^{\lambda'} \pmod{v} \ \ \ \text{ if } \ \ \ \ell \nmid n.
\end{equation}

We show that $a(1) \equiv 0 \pmod{\ell}$. Assume to the contrary that $a(1) \not \equiv 0 \pmod{\ell}$. Consider the Shimura lift

\begin{equation}
G:= \Sh_{1}(g)=\sum_{n=1}^{\infty}b(n)q^{n} \in S_{2\lambda'}(288)
\end{equation}
from \eqref{ShimuraCorrespondence}. 
If $\ell \nmid n$, then \eqref{ShimuraCorrespondenceCoefficients} and \eqref{somethingkewl} give
\begin{equation}\label{*}
b(n) \equiv a(1)\(\frac{12}{n}\)\sum_{d \mid n}d^{\lambda'-1}\(\frac{n}{d}\)^{\lambda'}=a(1)\(\frac{12}{n}\)n^{\lambda'-1}\sigma_{1}(n).
\end{equation}
This is congruent to the $n^{\text{th}}$ coefficient of 
\[
-\frac{a(1)}{24}\Theta^{\lambda'-1}\(E_{\ell+1}\otimes \(\frac{12}{\bullet}\)\) \equiv -\frac{a(1)}{24}\Theta^{\lambda'-1}\(E_{2}\otimes\(\frac{12}{\bullet}\)\) \pmod{\ell}.
\]

Note that $E_{2}\otimes(\frac{12}{\bullet}) = (E_{2}-2E_{2}\sl V_{2})\otimes\(\frac{12}{\bullet}\)$. 
This implies that the filtration of $E_{2}\otimes (\frac{12}{\bullet})$ on level $288$ is $2$, since $E_{2}-2E_{2}\sl V_{2} \in M_{2}(2)$ \cite[~$\mathsection$$1.2$]{DiamondShurman}.
By $(2)$ of Proposition~\ref{Gross}, we have
\[
\omega\(\bar{\Theta^{\lambda'-1}\(E_{\ell+1}\otimes\(\frac{12}{\bullet}\)\)}\)=\ell(\lambda'-1)+\lambda'+1.
\]
We also have
\[
G \equiv GE_{\ell-1}^{\lambda'-1} \in S_{\ell(\lambda'-1)+\lambda'+1}(288).
\]
By $(1)$ of Proposition~\ref{Gross}, there exists $H \in S_{\ell(\lambda'-1)+\lambda'+1}(288)$ with
\[
\bar{H}= \bar{GE_{\ell-1}^{\lambda'-1}+\frac{a(1)}{24}\Theta^{\lambda'-1}\(E_{\ell+1}\otimes\(\frac{12}{\bullet}\)\)} \in \bar{S_{\ell(\lambda'-1)+\lambda'+1}(288)}.
\]
Let $\sigma \in \Gal(K/\mathbb{Q})$ be a Frobenius element for $v$. Note that $\bar{H}$ has the form $\sum \bar{a(n)}q^{\ell n}$, so we have
\[
\bar{H}^{\sigma}=(\bar{H\sl U_{\ell}})^{\ell}. 
\]
By $(3)$ of Proposition~\ref{Gross}, we have
\[
\omega(\bar{H})=\omega(\bar{H}^{\sigma})=\ell\omega(\bar{H\sl U_{\ell}}).
\]
If $a(1) \not \equiv 0 \pmod{\ell}$, then $\omega(\bar{\frac{a(1)}{24}\Theta^{\lambda'-1}\(E_{\ell+1}\otimes\(\frac{12}{\bullet}\)\)}) > \omega(\bar{GE_{\ell-1}^{\lambda'-1}})$ since $\omega(GE_{\ell-1}^{\lambda'-1}) \leq 2\lambda'$ and $\lambda'$ is even. This would imply that
$\omega(\bar{H})=\omega(\bar{\Theta^{\lambda'-1}\(E_{\ell+1}\otimes\(\frac{12}{\bullet}\)\)})= \ell(\lambda'-1)+\lambda'+1$. This contradicts the fact that $\omega(\bar{H})$ is a multiple of $\ell$.
Thus, $a(1) \equiv 0 \pmod{\ell}$. By \eqref{*}, we have $\Theta(g) \equiv 0 \pmod{v}$. The result now follows as in the odd case.

\end{proof}

Now we prove Theorem~\ref{thm:main}.
\begin{proof}[Proof of Theorem~\ref{thm:main}]
Suppose that $\ell \geq 5$ is prime, that $K$ is a number field, and that $v$ is a prime of $K$ above $\ell$.
We may assume that $K$ is Galois over $\mathbb{Q}$. 
Suppose that $r$ is a positive integer satisfying $(r,6)=1$, that $\lambda$ is a non-negative integer satisfying $\lambda+\frac{1}{2} < \frac{\ell^{2}}{2}$, and that $f \in S_{\lambda+\frac{1}{2}}(1, \nu_{\eta}^{r})$ has the property that

\[
f \equiv \sum_{i=1}^{m}\sum_{n=1}^{\infty}a(t_{i}n^{2})q^{\frac{t_{i}n^{2}}{24}} \not \equiv 0 \pmod{v}.
\]
By Proposition~\ref{Proposition1} and Lemma~\ref{lemmatwomultipliers}, we may assume that

\[
f \equiv \sum_{n=1}^{\infty}a(n^{2})q^{\frac{n^{2}}{24}}+\sum_{n=1}^{\infty}a(\ell n^{2})q^{\frac{\ell n^{2}}{24}} \not \equiv 0 \pmod{v}
\]
and that either $r \equiv 1 \pmod{24}$ or $r \equiv \ell \pmod{24}$. So, we need only consider the cases when $f \in S_{\lambda+\frac{1}{2}}(1, \nu_{\eta})$ and when $f \in S_{\lambda+\frac{1}{2}}(1,\nu_{\eta}^{\ell})$ with $\ell \not \equiv 1 \pmod{24}$.

Suppose that $f \in S_{\lambda+\frac{1}{2}}(1,\nu_{\eta})$. Assume that $\lambda$ is even. If $\lambda=0$, then $f=c\eta$ for some $c \in \mathcal{O}_{v}$. This implies that
\[
f = a(1)\sum_{n=1}^{\infty}\(\frac{12}{n}\)q^{\frac{n^{2}}{24}},
\] 
which has the form of case $(1)$ of Theorem~\ref{thm:main}, so assume that $\lambda > 0$.
Theorem~\ref{Theorem1} then implies that
\begin{equation}\label{**}
\Theta^{\ell-1}(f)=\sum_{\ell \nmid n}a(n^{2})q^{\frac{n^{2}}{24}} \equiv a(1) \sum_{\ell \nmid n}\(\frac{12}{n}\)n^{\lambda}q^{\frac{n^{2}}{24}} \pmod{v}.
\end{equation}
Define $\bar{\lambda}:=\lambda \pmod{\ell-1}.$ By Lemma~\ref{Lemma1}, we have
\begin{equation}\label{R.L.C.}
\bar{\Theta^{\ell-1}(f)}=\bar{24^{\frac{\bar{\lambda}}{2}}a(1)\Theta^{\frac{\bar{\lambda}}{2}}(\eta)} \in \bar{S_{\frac{\bar{\lambda}}{2}(\ell+1)+\frac{1}{2}}(1,\nu_{\eta})}.
\end{equation}
The fact that
\[
\(\frac{\bar{\lambda}}{2}\)(\ell+1)+\frac{1}{2} < \frac{\ell^{2}}{2}
\]
implies that $\bar{f-\Theta^{\ell-1}(f)} \in \bar{S_{\lambda'+\frac{1}{2}}(1,\nu_{\eta})}$, where $\lambda'+\frac{1}{2} < \frac{\ell^{2}}{2}$. Since
\[
f-\Theta^{\ell-1}(f) \equiv \sum_{n=1}^{\infty}a(\ell n^{2})q^{\frac{\ell n^{2}}{24}}+\sum_{n=1}^{\infty}a(\ell^{2}n^{2})q^{\frac{\ell^{2} n^{2}}{24}} \pmod{v},
\]
we apply Lemma~\ref{Lemma2} to conclude that there exists $g \in S_{\lambda^{*}+\frac{1}{2}}(1,\nu_{\eta}^{\ell})$ 
with $\lambda^{*}+\frac{1}{2} < \frac{\ell}{2}$ satisfying
\[
g \equiv \(f-\Theta^{\ell-1}(f)\)\sl U_{\ell} \equiv \sum_{n=1}^{\infty}a(\ell n^{2})q^{\frac{n^{2}}{24}}+\sum_{n=1}^{\infty}a(\ell^{2} n^{2})q^{\frac{\ell n^{2}}{24}} \pmod{v}.
\]
If $g \equiv 0 \pmod{v}$, then
\[
f \equiv \Theta^{\ell-1}(f)\pmod{v},
\]
and, by \eqref{**}, this proves that
\[
f \equiv a(1) \sum_{n=1}^{\infty}\(\frac{12}{n}\)n^{\lambda}q^{\frac{n^{2}}{24}} \pmod{v}.
\]
This has the form of case $(1)$ of Theorem~\ref{thm:main}.

If $g \not \equiv 0 \pmod{v}$, then Proposition~\ref{Spicy} implies that $\lambda'=0$ and $g=c\eta$ for some $c \in \mathcal{O}_{v}$, which means that

\[
f-\Theta^{\ell-1}(f) \equiv a(\ell)\sum_{n=1}^{\infty}\(\frac{12}{n}\)q^{\frac{\ell n^{2}}{24}} \pmod{v}.
\]
Thus, we have

\begin{equation}\label{deletthis}
f \equiv a(1)\sum_{n=1}^{\infty}\(\frac{12}{n}\)n^{\lambda}q^{\frac{n^{2}}{24}}+a(\ell)\sum_{n=1}^{\infty}\(\frac{12}{n}\)q^{\frac{\ell n^{2}}{24}} \pmod{v}.
\end{equation}

Since $g \not \equiv 0 \pmod{v}$, we have $a(\ell) \not \equiv 0 \pmod{v}$. Proposition~\ref{Spicy} applied to $f$ implies that $\ell \equiv 1 \pmod{24}$. 
If $a(1) \equiv 0 \pmod{v}$, then \eqref{deletthis} has the form of case $(2)$ of Theorem~\ref{thm:main}.
This is equivalent to the congruence

\[
f \equiv a(\ell)\eta^{\ell} \pmod{v}.
\]
By $(4)$ of Proposition~\ref{Gross}, we have $\lambda \equiv \omega(\bar{\eta^{-1}f})  \equiv \omega(\bar{\eta^{\ell-1}}) \equiv \frac{\ell-1}{2}\pmod{\ell-1}$.

If $a(1) \not \equiv 0 \pmod{v}$, then \eqref{deletthis} has the form of case $(3)$ of Theorem~\ref{thm:main}. This is equivalent to the congruence

\[
f \equiv 24^{\frac{\lambda}{2}}a(1)\Theta^{\frac{\lambda}{2}}(\eta)+a(\ell)\eta^{\ell} \pmod{v}.
\]
By $(4)$ of Proposition~\ref{Gross}, we have $\omega(\bar{\eta^{-1}f}) \equiv \omega(\bar{\eta^{-1}\Theta^{\frac{\lambda}{2}}(\eta)}) \equiv \lambda \pmod{\ell-1}$. This implies that
$\omega(\bar{\eta^{\ell-1}}) \equiv \lambda \pmod{\ell-1}$. Since $\omega(\bar{\eta^{\ell-1}})=\frac{\ell-1}{2}$, we have $\lambda \equiv \frac{\ell-1}{2} \pmod{\ell-1}$.

 Now assume that $f \in S_{\lambda+\frac{1}{2}}(1,\nu_{\eta})$ and that $\lambda$ is odd. By Proposition~\ref{oddcases}, we have
 \[
 f \equiv \sum a(\ell n^{2})q^{\frac{\ell n^{2}}{24}} \pmod{v}.
 \] 
By Lemma~\ref{Lemma2} (since $\lambda+\frac{1}{2} < \frac{\ell^{2}}{2}$), there exists $g \in S_{\lambda'+\frac{1}{2}}(1, \nu_{\eta}^{\ell})$ such that $g \equiv f\sl U_{\ell} \pmod{v}$ and $\lambda'+\frac{1}{2} < \frac{\ell}{2}$. 
Proposition~\ref{Spicy} implies that $\lambda'=0$ and $g =c\eta$ for some $c \in \mathcal{O}_{v}$. Thus,
\[
f \equiv a(\ell)\sum_{n=1}^{\infty}\(\frac{12}{n}\)q^{\frac{\ell n^{2}}{24}} \pmod{v}.
\]
Since $f \not \equiv 0 \pmod{v}$ has a Fourier expansion of the form $\eqref{lemma4.2}$, we have 
$r \equiv \ell \equiv 1 \pmod{24}$
in this case. As above, this implies that $\lambda \equiv \frac{\ell-1}{2} \pmod{\ell-1}$, which implies that $\lambda$ is even. This is a contradiction, so $\lambda$ cannot be odd in this case.

Finally, suppose that $\ell \not \equiv 1 \pmod{24}$ and that $f \in S_{\lambda+\frac{1}{2}}(1,\nu_{\eta}^{\ell})$. 
By \eqref{lemma4.2}, we have
\[
f \equiv \sum_{n=1}^{\infty}a(\ell n^{2})q^{\frac{\ell n^{2}}{24}} \pmod{v}.
\]
By Lemma~\ref{Lemma2}, there is $g \in S_{\lambda'+\frac{1}{2}}(1,\nu_{\eta})$ with $\lambda'+\frac{1}{2}< \frac{\ell}{2}$ such that $g \equiv f\sl U_{\ell} \pmod{v}$.
Proposition~\ref{Spicy} implies that $\lambda'=0$ and that $g=c\eta$ for some $c \in \mathcal{O}_{v}$. Thus,
\[
f \equiv a(\ell)\sum_{n=1}^{\infty}\(\frac{12}{n}\)q^{\frac{\ell n^{2}}{24}} \pmod{v}.
\]
This has the form of case $(2)$ of Theorem~\ref{thm:main}. As above, we have $\lambda \equiv \frac{\ell-1}{2} \pmod{\ell-1}$.
\end{proof}

 \section{Acknowledgements}
The author would like to thank Scott Ahlgren for suggesting this project and for advice and guidance for this work. The author would also like to thank the referee for carefully reading this manuscript and making helpful comments which improved its exposition. Finally, the author would like to thank the Graduate College Fellowship program at the University of Illinois at Urbana-Champaign and the Alfred P. Sloan Foundation for their generous research support.

 \bibliographystyle{amsalpha}

 \bibliography{CongruencesforLevel1cuspformswithetamultiplier}
 
 \end{document}